\newtheorem{theorem}{Theorem}[section]
\newtheorem*{theorem*}{Theorem}
\newtheorem{corollary}[theorem]{Corollary}
\newtheorem*{corollary*}{Corollary}
\newtheorem{proposition}[theorem]{Proposition} 
\newtheorem*{proposition*}{Proposition} 
\newtheorem{remark}[theorem]{Remark} 
\newtheorem{definition}[theorem]{Definition} 
\newtheorem{assumption}[theorem]{Assumption} 
\title{Summation-by-parts approximations of the second derivative: Pseudoinverses of singular operators and revisiting the sixth order accurate narrow-stencil operator}
\author{Sofia Eriksson\thanks{Department of Mathematics, Linnaeus University, Växjö, Sweden. Email: sofia.eriksson@lnu.se} \and Siyang Wang\thanks{Division of Applied Mathematics, UKK, Mälardalen University, Västerås, Sweden. Email: siyang.wang@mdh.se}}
\date{}
\begin{document}

\maketitle

\begin{abstract}
We consider finite difference approximations of the second derivative, exemplified in Poisson’s equation, the heat equation and the wave equation. The  finite difference operators satisfy a summation-by-parts property, which mimics the integration-by-parts. Since the operators approximate the second derivative, they are singular by construction. To impose boundary conditions, these operators are modified using Simultaneous Approximation Terms. This makes the modified matrices non-singular, for most choices of boundary conditions. Recently, inverses of such matrices were derived. However, when considering Neumann boundary conditions on both boundaries, the modified matrix is still singular. For such matrices, we have derived an explicit expression for the Moore--Penrose pseudoinverse, which can be used for solving elliptic problems and some time-dependent problems. The condition for this new pseudoinverse to be valid, is that the modified matrix does not have more than one zero eigenvalue. 
We have reconstructed the sixth order accurate narrow-stencil operator with a free parameter and show that more than one zero eigenvalue can occur. We have performed a detailed analysis on the free parameter to improve the properties of the second derivative operator. We complement the derivations by numerical experiments to demonstrate the improvements of the new second derivative operator. 

\end{abstract}

\textbf{Keywords:}  Finite difference methods, Summation-by-parts, Singular operators, Pseudoinverses

\textbf{AMS subject classifications:}  65M06, 65M12

\newcommand{\trans}{\mathsf{T}}

\section{Introduction}

Partial differential equations (PDEs) that involve the second derivative include Poisson's equation, the heat equation and the wave equation. 
Here, we approximate the second derivative using high-order accurate finite differences. Especially for hyperbolic problems with sufficiently smooth solutions, it is well-known that high-order methods are computationally more efficient than low-order methods \cite{Kreiss1972}. 
In particular, high-order finite difference operators satisfying a summation-by-parts (SBP) property \cite{ref:KREI74} lead to an energy-stable discretization given a suitable numerical boundary treatment, for example the simultaneous-approximation-term (SAT) method \cite{Carpenter1994}. 

The SBP-SAT finite difference methods have been widely used to solve hyperbolic and parabolic PDEs \cite{Del2014Review,Svard2014}. The spatial approximation can also be used to solve time-independent problems, for example Poisson's equation of elliptic type, where the discretization leads to a linear system that needs to be solved. However, for problems with Neumann boundary conditions only, the discretization matrix is singular. In this paper, we derive an analytical formula for the Moore--Penrose inverse of the singular discretization matrix, which can be used to solve Poisson's equation with Neumann boundary conditions. 

It is often assumed 
that the singular discretization matrix is inherently rank-deficient by one. By using recent results from \cite{InversesEriksson2020preprint}, we 
prove that this is indeed the case for the second and fourth order accurate dicretization. In the sixth order case, we find no such proof. In contrast, we find a counter example.
By revisiting the conditions for an SBP-operator, we construct a  one-parameter family of sixth order accurate, narrow-stencil SBP operators for the second derivative. The free parameter was first mentioned in \cite{Sjogreen2012} but no such operator was presented. A particular choice of the free parameter reduces the operator to the traditional sixth order SBP operator derived in \cite{Mattsson2004503}. 
It turns out that it is possible to choose this free parameter such that the discretization matrix is rank-deficient by two or three, even though all the accuracy and SBP stability properties are fulfilled.

Avoiding that unfortunate choice, we instead have the possibility to improve the second derivative operator.
We perform a careful analysis of our operator,
first making sure that it leads to stability, correct nullspace and in addition compatibility with the first derivative operator. We then consider
hyperbolic, parabolic and elliptic problems with Dirichlet or Neumann boundary conditions,
focusing on accuracy and spectral radius. 
With respect to those 
properties, recommendations of the free parameter are given, which depend on the underlying problem. 

The paper is organized as follows. In Section 2, we formulate the problem and introduce the SBP concepts. The Moore--Penrose inverse of the singular discretization matrix (for the Neumann boundary conditions) is derived in Section 3.
In Section 4, we relate the existence of the Moore--Penrose inverse to the rank of the discretization operator.
The new sixth order accurate SBP operators with a free parameter is constructed and analyzed in Section 5.
In Section 6, we perform numerical experiments to investigate how the free parameter affects the properties of the SBP operator.
The paper is summarized in Section 7.


\section{Problem formulation}

\newcommand{\trunc}{\mathbf{r}}
\newcommand{\smooth}{w}
\newcommand{\restrict}{\mathbf{w}}

\newcommand{\ones}{\mathbf{1}}
\newcommand{\xfat}{\mathbf{x}}
\newcommand{\indexl}{\text{\tiny L}}
\newcommand{\indexr}{\text{\tiny R}}
\newcommand{\indexlr}{\text{\tiny L,R}}
\newcommand{\el}{\mathbf{e}_\indexl}
\newcommand{\er}{\mathbf{e}_\indexr }
\newcommand{\elr}{\mathbf{e}_\indexlr}
\newcommand{\dsel}{\mathbf{d}_\indexl}
\newcommand{\dser}{\mathbf{d}_\indexr }
\newcommand{\dselr}{\mathbf{d}_\indexlr}
\newcommand{\app}{\gamma}
\newcommand{\smalll}{\text{l}}
\newcommand{\smallr}{\text{r}}
\newcommand{\mul}{\mu_\indexl}
\newcommand{\mur}{\mu_\indexr}
\newcommand{\mulr}{\mu_\indexlr}

\newcommand{\DD}{D_{\text{\tiny D}}}
\newcommand{\DN}{D_{\text{\tiny N}}}
\newcommand{\DDN}{D_{\text{\tiny D,N}}}
\newcommand{\wDDN}{\widetilde{D}_{\text{\tiny D,N}}}
\newcommand{\wDD}{\widetilde{D}_{\text{\tiny D}}}
\newcommand{\wDN}{\widetilde{D}_{\text{\tiny N}}}

\newcommand{\fD}{\mathbf{f}_{\text{D}}}
\newcommand{\fN}{\mathbf{f}_{\text{N}}}
\newcommand{\fDN}{\mathbf{f}_{\text{D,N}}}

In this section, we present the preliminaries of  notations and basic properties of the SBP-SAT operators. Thereafter we present the continuous problems and their discrete counterparts utilizing these SBP-SAT operators.

\subsection{Preliminaries}
Let $\Omega=[0,1]$ denote a bounded domain in $\mathbb{R}$. We discretize $x\in\Omega$ by using $n+1$ equidistant grid points $x_i=ih$, where $i=0,1,\cdots,n$ and $h=1/n$. We will frequently use the following grid functions
\begin{align}
 \ones=[1, 1, \hdots\ 1]^\trans,\quad &\xfat=[x_0, x_1, \hdots, x_n]^\trans,\label{onesandxfat}\\
 \el=[1, 0, \hdots, 0]^\trans,\quad &\er=[ 0, \hdots, 0, 1]^\trans.\notag
\end{align}
Throughout the paper, bold symbols are used for denoting column vectors. Moreover, 
uppercase letters are reserved for matrices and lowercase letters for scalars.

Let $\smooth(x)\in C^{\infty}(\Omega)$ be a smooth function in $\Omega$ and let $\restrict=\smooth(\xfat)$ be the restriction of $\smooth(x)$ on the grid $\xfat$ in \eqref{onesandxfat}.
The SBP operator $D_1$, that approximates the first derivative such that
$D_1 \restrict \approx \smooth_x(\xfat)$, was constructed in \cite{ref:KREI74,ref:STRA94} and fulfills the SBP property in Definition~\ref{defD1} below.
\begin{definition}\label{defD1}
The first derivative operator $D_1$ is an SBP operator if it satisfies $D_1 = H^{-1} Q$, where $H$ is symmetric positive definite and where {\normalfont $Q+Q^\trans=-\el\el^\trans+\er\er^\trans$}. 
\end{definition}

The symmetric positive definite operator $H$ defines a discrete norm, and it is also a quadrature \cite{Hicken2013}. When $H$ is diagonal, the corresponding operators are called diagonal-norm SBP operators. In this case, $D_1$ has order of accuracy $2p$ in the interior rows and at most order $p$ in a few rows near the boundaries. The accuracy near the boundaries can be improved to order $2p-1$ by using a nondiagonal SBP norm, but the resulting operators are not widely used because energy stability cannot be proved for general problems with variable coefficients.

To approximate the second derivative, we need an SBP operator $D_2$ such that 
$D_2 \restrict \approx \smooth_{xx}(\xfat)$, satisfying the following definition: 
\begin{definition}\label{defD2}
The second derivative operator $D_2$ is an SBP operator if it satisfies {\normalfont 
\begin{equation}\label{SBP_D2}
 D_2=H^{-1}(-A-\el\dsel^\trans+\er\dser^\trans),
\end{equation}}%
where $H$ is symmetric positive definite and $A$ is symmetric positive semi-definite. The operators {\normalfont $\dselr$} are consistent approximations of the first derivative at the left and right boundaries. 
\end{definition}
The most straightforward way to construct a second derivative SBP operator is to apply $D_1$ twice and obtain the {\it wide-stencil} operator $D_1^2$ \cite{Carpenter1999341}, which fulfills the above SBP property. However, the wide-stencil operators have at most order $p-1$ in a few rows near the boundaries. In addition, they produce spurious oscillations for nonsmooth problems. These two shortcomings can be overcome by using {\it narrow-stencil} operators, which are operators with a minimal stencil width in the interior that were constructed in \cite{Mattsson2004503}. 
The narrow-stencil operators have order of accuracy $2p$ in the interior rows and at most order $p$ in a few rows near the boundaries.
%
%
That is, $D_2\restrict=\restrict ^{(2)}+\mathcal{O}(h^q)\restrict^{(q+2)}$, where $q=2p$ for the interior rows and $q=p$ for a few rows near boundaries.  The exact expressions of $\mathcal{O}(h^q)$ are obtained using Taylor expansions, and for consistency $q>0$ is necessary. Replacing the general $\smooth(x)$ by monomials $\smooth=x^k$, the consistency demand can be expressed as
\begin{align}\label{consistency}
 D_2\ones=0,&&D_2\xfat=0,&&\dselr^\trans\ones=0,&&\dselr^\trans\xfat=1.
\end{align}
For $q\geq 1$, the operator has to fulfill $D_2\xfat^k=k(k-1)\xfat^{k-2}$ for $k=2,3,\hdots,q+1$, where  $\xfat^k=[x_0^k, x_1^k, \hdots, x_n^k]^\trans$.
For later reference, \eqref{consistency} includes demands on $\dselr$ as well.
Moreover, when referring to an operator of for example "sixth order", we mean the interior order $2p=6$, unless stated otherwise.

The properties of the matrix $A$ associated with $D_2$ are important. A so-called "borrowing technique" is needed for an energy stable discretization of the wave equation with Dirichlet boundary conditions \cite{APPELO2007531,Mattsson2009} or material interface conditions \cite{MATTSSON20088753}, and for a dual-consistent discretization of the heat equation with Dirichlet boundary conditions \cite{Eriksson2018}. 
Below, we give a definition of the 
{\it borrowing capacity} related to $D_2$.
\begin{definition}\label{bp}
The borrowing capacity $\app$ is the maximum value such that
\[
\widetilde{A} = A-h\app (\dsel\dsel^\trans+\dser\dser^\trans)
\]
is symmetric positive semi-definite.
\end{definition}
The precise value of $\app$ was computed in \cite{InversesEriksson2020preprint,MATTSSON20088753,Virta2014}.
In this paper, our main focus is the second derivative operator, but when solving PDEs with both first and second derivatives, it is important that the SBP norm $H$ in $D_1$ and $D_2$ from Definition~\ref{defD1} and \ref{defD2} are the same. In addition, $D_1$ and $D_2$ must be {\it compatible} \cite{Mattsson2008b}. 
\begin{definition}\label{DefComp}
The SBP operators $D_1$ and $D_2$ are compatible if 
\[
R = A - D_1^\trans H D_1
\]
is symmetric positive semi-definite.
\end{definition}
The SBP operators $D_2$ constructed in \cite{Mattsson2004503} are compatible with $D_1$ constructed in \cite{ref:KREI74,ref:STRA94}.

\subsection{The continuous problem}

\newcommand{\contsol}{u}
\newcommand{\force}{f}
\newcommand{\gNl}{g_\smalll}
\newcommand{\gNr}{g_\smallr }
\newcommand{\gDl}{g_\indexl}
\newcommand{\gDr}{g_\indexr }

Consider the one-dimensional wave equation 
\begin{align}
\label{wave}
\begin{aligned}
\contsol_{tt}&=\contsol_{xx}+\force ,&&x\in[0, 1],\\
\end{aligned}
\end{align}
with suitable initial conditions, 
 where $\force(x,t)$ is
the forcing function. The boundary conditions are either of
Dirichlet type
\begin{align}
\label{waveD}
\contsol(0,t)=\gDl,\quad 
\contsol(1,t)=\gDr,
\end{align}
or of Neumann type
\begin{align}
\label{waveN}
\contsol_{x}(0,t)=\gNl,\quad 
\contsol_x(1,t)=\gNr.
\end{align}
We assume that the initial and boundary data are compatible, sufficiently smooth functions.

The SBP-SAT discretization of the Neumann problem was derived in \cite{Mattsson2004503}, and then
later for the Dirichlet problem  \cite{APPELO2007531,Mattsson2009}. We state them below and also discuss how they can be used for solving stationary problems.

\subsection{The discrete problem}\label{sec_discrete}

\newcommand{\discsol}{\mathbf{v}}
\newcommand{\fh}{\mathbf{f}}
\newcommand{\Fsnok}{\widehat{\mathbf{f}}}
\newcommand{\rhs}{\mathbf{b}}

\newcommand{\sigmal}{\sigma_\indexl}
\newcommand{\sigmar}{\sigma_\indexr}
\newcommand{\sigmalr}{\sigma_\indexlr}
\newcommand{\taul}{\tau_\indexl}
\newcommand{\taur}{\tau_\indexr}
\newcommand{\taulr}{\tau_\indexlr}

\newcommand{\AtildeD}{\widehat{A}_D}
\newcommand{\AtildeN}{\widehat{A}_N}

\newcommand{\Abeg}{a_\indexl}
\newcommand{\Aend}{a_\indexr }
\newcommand{\Aoff}{a_{\text{\tiny C}}}
\newcommand{\Abar}{\bar{A}}
\newcommand{\Awest}{\vec{a}_\indexl}
\newcommand{\Aeast}{\vec{a}_\indexr }
\newcommand{\Avecs}{\vec{a}_\indexlr}

\newcommand{\factor}{\varphi}

First, we consider the Dirichlet problem \eqref{wave} with \eqref{waveD}. The semi-discrete approximation can be written as 
\begin{align}\label{waveD_semi}
\discsol_{tt}&=D_2\discsol+\fh +H^{-1}( \mul \el -\dsel) \left( \el^\trans\discsol-\gDl\right)+H^{-1}(\mur \er + \dser) \left( \er^\trans\discsol-\gDr\right),
\end{align}
where $\discsol$ is the approximation of the continuous solution $\contsol(x,t)$ on the grid $\xfat$, and $\fh$ is the forcing function $\force(x,t)$ evaluated on the grid. By using the SBP identity \eqref{SBP_D2}, we rewrite \eqref{waveD_semi} as 
\begin{align}\label{SchemeWaveDirichlet}
\discsol_{tt}&=\DD\discsol+\fD,
\end{align}
where $\fD=\fh -H^{-1}( \mul \el -\dsel) \gDl-H^{-1}(\mur \er + \dser) \gDr$ and where
\begin{align}\label{D2tildeDirichlet}
 \DD=D_2+H ^{-1}( \mul \el -\dsel) \el^\trans+H ^{-1}(\mur \er + \dser) \er^\trans.
\end{align}
The scheme \eqref{waveD_semi} is energy stable \cite{Mattsson2009} if
\begin{align*}\begin{split}
\mulr&\leq-\frac{1}{h\app},\end{split}
\end{align*}
where $\app$ is the borrowing capacity in Definition \ref{bp}. Even though the choice $\mulr=-1/(h\app)$ yields energy stability, it is not an advisable choice, since it makes the discretization matrix $\DD$ singular, see \cite{InversesEriksson2020preprint} or \cite{WangKreiss2017}. Instead, we are going to use
\begin{align}\label{factor}
\mulr&=-\frac{\factor}{h\app},&&\factor>1,
\end{align}
where the factor $\factor$ needs to be sufficiently large to avoid the risk of sub-optimal convergence \cite{WangKreiss2017}, but not too large either -- since that might lead to stiffness (this aspect will be discussed later in Section~\ref{InfluenceOfFree}).

Next, we consider the Neumann problem \eqref{wave} with \eqref{waveN}. By  using the SBP identity \eqref{SBP_D2}, the semi-discretization 
\begin{align}
\label{waveN_semi}
\begin{split}
\discsol_{tt}=D_2\discsol+\fh &+H^{-1} \sigmal \el \left(\dsel^\trans \discsol-\gNl\right)
+H^{-1}\sigmar \er \left( \dser^\trans \discsol-\gNr\right),
\end{split}
\end{align}
 can be rewritten as
\begin{align}\label{SchemeTime}
\discsol_{tt}&=\DN\discsol+\fN,
\end{align}
where $\fN=\fh -H ^{-1} \sigmal \el\gNl-H ^{-1}\sigmar \er \gNr$ and
 \begin{align}\label{D2tildeNeumann}
\DN&=D_2+H ^{-1} \sigmal \el\dsel^\trans +H ^{-1}\sigmar \er \dser^\trans. 
\end{align}
Given the choice $\sigmal=1$, $\sigmar=-1$, the discretization \eqref{waveN_semi} is energy stable with $\DN=-H^{-1}A$ \cite{Mattsson2004503}.

 We note that the same discretization matrices $\DD$ and $\DN$ can be used to solve the heat equation with the Dirichlet and Neumann boundary conditions, respectively, and the resulting schemes are dual-consistent \cite{Eriksson2018}. In addition, $\DD$ and $\DN$ can also be used to discretize Poisson's equation. In this case, a system of linear equations 
 \begin{align}\label{PoissonBoth}
-\wDDN\discsol = H\fDN
 \end{align}
 with the symmetric matrix $\wDDN=H\DDN$ must be solved. For the Dirichlet problem, the parameter $\mulr$ can be chosen such that $\wDD$ is invertible and the analytical expression of its inverse is derived in \cite{InversesEriksson2020preprint}. However, for the Neumann problem, the corresponding matrix $\wDN=-A$ is always singular. In this case, some pseudoinverse of $A$ must be computed. In the next section, we derive an analytical expression for the Moore-Penrose pseudoinverse of $A$.
\section{The pseudoinverse of $A$}

Consider the one-dimensional Poisson's equation $-\contsol_{xx}=\force$, with Neumann boundary conditions \eqref{waveN}. The solution is only determined up to a constant. To obtain a unique solution,  an additional constraint needs to be added, for example that $\int_\Omega\contsol \text{d}x=0$.
The SBP-SAT discretization is given in \eqref{PoissonBoth}, with $\wDN=-A$ and $\fN$,
that is
\begin{equation}\label{Avb}
A\discsol=\rhs,
\end{equation}
where $\rhs = H\fN$.
Recall that $A$ is singular. When $\rhs$ is in the column space of $A$, the linear system is under-determined and has infinitely many solutions. Corresponding to the continuous case, the  constraint to obtain a unique solution is for example the mean value of  $\discsol$ is zero. If $\rhs$ is not in the column space of $A$, then no solution satisfies  the linear system exactly and a least-squares solution can be computed. In either case, we would like to find a pseudoinverse of $A$. 



\newcommand{\Ibar}{\bar{I}}
\newcommand{\ett}{\vec{1}}
\newcommand{\xvec}{\vec{x}}

Let the parts of $A$ be denoted as shown below 
\begin{align}\label{Aparts}
A=\left[\begin{array}{ccc}\Abeg&\Awest^\trans&\Aoff\\\Awest&\Abar&\Aeast\\\Aoff&\Aeast^\trans&\Aend\end{array}\right],
\end{align} 
where $\Abeg$, $\Aoff$, $\Aend$ are scalars and $\Awest$, $\Aeast$ are $(n-1)\times 1$ vectors. Assuming that $\Abar$ is non-singular, we define  
\begin{align}\label{G2}
G_2=\left[\begin{array}{ccc}0&0&0\\0&\Abar^{-1}&0\\0&0&0\end{array}\right].
\end{align} 
Using the consistency
restrictions  on $D_2$ and $\dselr$ in \eqref{consistency}, we see 
that $A$  in \eqref{SBP_D2} fulfills
\begin{align}\label{A1Ax}
A\ones=0,&&A\xfat=\er-\el. 
\end{align}
The two relations in \eqref{A1Ax} can also be expressed 
componentwise as
\begin{align}\label{A1Axcomponents}
\left[\begin{array}{ccc}\Abeg+\Awest^\trans\ett+\Aoff\\\Awest+\Abar\ett+\Aeast\\\Aoff+\Aeast^\trans\ett+\Aend\end{array}\right]
=\left[\begin{array}{c}0\\0\\0\end{array}\right],&&
\left[\begin{array}{c}\Awest^\trans\xvec+ \Aoff\\\Abar\xvec+ \Aeast\\\Aeast^\trans\xvec+ \Aend\end{array}\right]=\left[\begin{array}{c}-1\\0\\1\end{array}\right].
\end{align}
where $\ett=[1\ 1\ 1\ \hdots\ 1]^\trans$ and $\xvec=[x_1\ x_2\ \hdots\ x_{n-1}]^\trans$ are shorter versions of $\ones$ and $\xfat$ in \eqref{onesandxfat}.
In addition, following the derivations in \cite{InversesEriksson2020preprint} we also obtain the identity 
\begin{align}\label{AG}\begin{split}
AG_2
&=\left[\begin{array}{ccc}0&\Awest^\trans\Abar^{-1}&0\\0&\Ibar&0\\0&\Aeast^\trans\Abar^{-1}&0\end{array}\right]
=\left[\begin{array}{ccc}1&0&0\\0&\Ibar&0\\0&0&1\end{array}\right]+\left[\begin{array}{ccc}-1&(\xvec-\ett)^\trans&0\\0&0&0\\0&-\xvec^\trans&-1\end{array}\right]
=I-\el(\ones-\xfat )^\trans-\er\xfat^\trans, 
\end{split}
\end{align}
where $I$ is the $(n+1)\times(n+1)$ identity matrix and $\Ibar$ is the $(n-1)\times(n-1)$ identity matrix.
In the second step we have used the relations $\Abar^{-1}\Awest+\ett+\Abar^{-1}\Aeast=0$ and $\xvec+ \Abar^{-1}\Aeast=0$ from the mid rows in \eqref{A1Axcomponents}.

We are now ready to present an explicit formula of the Moore–Penrose pseudoinverse of $A$.

\subsection{The Moore–Penrose inverse}
For the matrix $A$ with real entries, the Moore–Penrose inverse $A^+$ fulfills the following four properties \cite{Wangguorong2018}
\begin{subequations}\label{MPeqs}
\begin{align}
\label{MP1}
AA^+A&=A\\
\label{MP2}
A^+AA^+&=A^+\\
\label{MP3}
(AA^+)^\trans&=AA^+\\
\label{MP4}
(A^+A)^\trans&=A^+A.
\end{align}
\end{subequations}
For the SBP matrix $A$ in particular, we have derived the explicit form of $A^+$ shown in Theorem~\ref{ThmMP} below:
\begin{theorem}\label{ThmMP}
Consider $A$ from \eqref{SBP_D2}. If $G_2$  in \eqref{G2} exists, then the Moore–Penrose inverse of $A$ is 
\begin{align}\label{MPinvofA}
A^+=\left(I-\frac{\ones\ones^\trans}{n+1}\right)G_2\left(I-\frac{\ones\ones^\trans}{n+1}\right)+\left(\xfat- \frac{\ones}{2}\right)\left(\xfat- \frac{\ones}{2}\right)^\trans,
\end{align}
where 
$\ones$ and $\xfat$ are given in \eqref{onesandxfat}, and $I$ is the $(n+1)\times (n+1)$ identity matrix.
\end{theorem}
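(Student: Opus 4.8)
The plan is to verify the four Moore--Penrose relations \eqref{MP1}--\eqref{MP4} directly for the claimed $A^+$, using only the algebraic facts already available: $A\ones = 0$ and $A\xfat = \er - \el$ from \eqref{A1Ax}, the identity $AG_2 = I - \el(\ones-\xfat)^\trans - \er\xfat^\trans$ from \eqref{AG}, together with the elementary quadrature facts $\ones^\trans\ones = n+1$ and $\ones^\trans\xfat = \sum_{i=0}^n ih = (n+1)/2$.

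Before computing, I would record two structural reductions. First, $A$ is symmetric, hence so is its central block $\bar{A}$, hence so is $G_2$; the second term of \eqref{MPinvofA} is manifestly symmetric, so $(A^+)^\trans = A^+$. Second, writing $P = I - \frac{\ones\ones^\trans}{n+1}$ for the orthogonal projector onto $\{\ones\}^\perp$, the vector $\xfat - \ones/2$ is orthogonal to $\ones$ (because $\ones^\trans\xfat = (n+1)/2$) and $P\ones = 0$, so $A^+\ones = 0$; also $\ones^\trans A = (A\ones)^\trans = 0$. With $A^\trans=A$, $(A^+)^\trans=A^+$, $A^+\ones=0$ and $\ones^\trans A=0$ in place, $(AA^+)^\trans = A^+A$, so \eqref{MP3}--\eqref{MP4} hold as soon as $AA^+$ is symmetric; and if moreover $AA^+ = P$, then \eqref{MP1} becomes $PA = A$ (true since $\ones^\trans A = 0$) and \eqref{MP2} becomes $PA^+ = A^+$ (true since $\ones^\trans A^+ = 0$). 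Thus the whole theorem collapses to the single identity $AA^+ = P$.

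To prove $AA^+ = P$ I would multiply out. Since $A\ones = 0$ we have $AP = A$, so $A\bigl(PG_2P\bigr) = AG_2P = AG_2 - \frac{(AG_2\ones)\ones^\trans}{n+1}$; substituting \eqref{AG} and using $\ones^\trans\ones=n+1$, $\ones^\trans\xfat=(n+1)/2$ turns this into an explicit sum of the rank-one blocks $\el\ones^\trans$, $\er\ones^\trans$, $\el\xfat^\trans$, $\er\xfat^\trans$ and $\ones\ones^\trans$. For the second term of $A^+$, $A(\xfat-\ones/2)(\xfat-\ones/2)^\trans = (\er-\el)(\xfat-\ones/2)^\trans$ by $A\xfat = \er-\el$ and $A\ones = 0$. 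Adding the two contributions, every rank-one piece containing $\el$, $\er$ or $\xfat$ cancels and one is left with $AA^+ = I - \frac{\ones\ones^\trans}{n+1} = P$.

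The arithmetic in the last step is routine; the point I would be most careful to get right is the reduction of the second paragraph --- that symmetry of $A$ and $A^+$ together with $A^+\ones = 0$ reduces all four axioms to $AA^+ = P$ --- and the bookkeeping of which rank-one terms cancel. It is also worth noting where the hypothesis is used: ``$G_2$ exists'' (i.e. $\bar{A}$ nonsingular) is precisely the condition under which \eqref{AG}, and therefore the whole argument, is valid; a posteriori it forces the nullspace of $A$ to be exactly $\mathrm{span}\{\ones\}$, since $AA^+ = P$ has rank $n$ while $A\ones = 0$.
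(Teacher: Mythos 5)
Your proof is correct, and it takes the route that the paper mentions but deliberately does not carry out: a direct verification of the four Moore--Penrose axioms, whereas the paper \emph{derives} the formula by starting from the ansatz $A^+=G_2+X$, refining it to $G_2+\xfat\xfat^\trans+\ones\mathbf{y}_1^\trans+\mathbf{y}_2\ones^\trans$, and then pinning down $\mathbf{y}_1$, $\mathbf{y}_2$ and the remaining constant by imposing \eqref{MP1}, \eqref{MP3}, \eqref{MP4} and \eqref{MP2} in turn. What you add to the ``straightforward'' verification is a genuinely useful structural reduction: since $G_2$ inherits symmetry from $\Abar$, the candidate $A^+$ is symmetric, and together with $A^+\ones=0$ (which follows from $P\ones=0$ and $\ones^\trans\xfat=(n+1)/2$) and $\ones^\trans A=0$, all four axioms collapse to the single identity $AA^+=P$ with $P=I-\ones\ones^\trans/(n+1)$; the remaining computation is a short cancellation, since $A(PG_2P)=AG_2P=P-(\er-\el)(\xfat-\ones/2)^\trans$ by \eqref{AG} while the rank-one term contributes exactly $(\er-\el)(\xfat-\ones/2)^\trans$ by \eqref{A1Ax}. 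Your approach is shorter and makes the logical dependencies transparent (in particular it isolates exactly where the existence of $G_2$, i.e.\ the invertibility of $\Abar$, enters, and it yields $\operatorname{rank}(A)=n$ as a free by-product, consistent with Corollary~\ref{CorRank}); the paper's derivation is longer but explains how one would discover the formula rather than merely check it. The paper also arrives at $AA^+=A^+A=I-\ones\ones^\trans/(n+1)$ along the way, so the two arguments meet at the same key identity.
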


\begin{proof}[Proof of Theorem~\ref{ThmMP}]
It is straightforward to prove the theorem by inserting \eqref{MPinvofA} directly into \eqref{MPeqs} and verify that all four properties are satisfied. In the following, however, we use a more pedagogical approach and present how we have derived the analytical  expression of $A^+$.

Based on \eqref{AG}, we first make the ansatz $A^+=G_2+X$, where $X$ is to be determined. After inserting the ansatz into \eqref{MP1}, we use \eqref{AG} and \eqref{A1Ax} to obtain
\begin{align*}
AA^+A
&=A -(\er-\el)(\er-\el)^\trans +AXA.
\end{align*}
 We see that $X$ must be chosen such that the term $(\er-\el)(\er-\el)^\trans$ is cancelled. 
In \eqref{A1Ax}, we note that $A\xfat=\er-\el$, and can thus specify $X$ further as $X=\xfat\xfat^\trans+\ones \mathbf{y}_1^\trans+\mathbf{y}_2\ones^\trans$. This leads to $AA^+A=A$ for any $(n+1)\times1$-vectors $\mathbf{y}_{1,2}$, since $A\ones=0$. That is, \eqref{MP1} is satisfied if $A^+=G_2+\xfat\xfat^\trans+\ones \mathbf{y}_1^\trans+\mathbf{y}_2\ones^\trans$, and $\mathbf{y}_{1,2}$ will be determined by the other three properties.

We now consider the third requirement \eqref{MP3}, which requires that $AA^+$ is symmetric. We have
\begin{align*}
 AA^+&=A\left(G_2+\xfat\xfat^\trans+\ones \mathbf{y}_1^\trans+\mathbf{y}_2\ones^\trans\right)\\
 &=I-\el\ones ^\trans+A\mathbf{y}_2\ones^\trans,
\end{align*}
where we have used \eqref{AG} and \eqref{A1Ax}. For $AA^+$ to be symmetric, we need to find a $\mathbf{y}_2$ such that $A\mathbf{y}_2-\el =\kappa\ones$ for some constant $\kappa$.
To do that, we first observe that $AG_2$ from \eqref{AG} gives
\begin{align*}
AG_2\ones&=\ones-\el(\ones-\xfat )^\trans\ones-\er\xfat^\trans \ones=\ones-\frac{n+1}{2}\el-\frac{n+1}{2}\er,
\end{align*}
where we have used that $\ones^\trans\ones=n+1$ and that $\xfat^\trans \ones=h(0+1+2+\hdots n)=(n+1)/2$. Combining this with $A\xfat=\er-\el$ from \eqref{A1Ax}, we make the terms containing $\er$ cancel, and obtain
\begin{align*}
\frac{1}{n+1}AG_2\ones+ \frac{1}{2}A\xfat=\frac{1}{n+1}\ones-\el.
\end{align*}
If we choose $\mathbf{y}_2=c_2\ones-\frac{1}{n+1}G_2\ones- \frac{1}{2}\xfat$ with an arbitrary scalar $c_2$, we obtain $A\mathbf{y}_2-\el=-\frac{1}{n+1}\ones$ and consequently $AA^+=I-\frac{1}{n+1}\ones\ones^\trans$, which is indeed symmetric. 

The fourth requirement \eqref{MP4} states that $A^+A$ must be symmetric. With a similar derivation as above, we find that the choice $\mathbf{y}_1=c_1\ones-\frac{1}{n+1}G_2\ones- \frac{1}{2}\xfat$, with any scalar $c_1$, yields the symmetric $A^+A =I -\frac{1}{n+1}\ones\ones^\trans$.
 
So far, after demanding \eqref{MP1}, \eqref{MP3} and \eqref{MP4} to be satisfied, we have
\begin{align*}
A^+&=G_2+\xfat\xfat^\trans+\ones \mathbf{y}_1^\trans+\mathbf{y}_2\ones^\trans\\
&=G_2+\xfat\xfat^\trans-\ones \left(\frac{G_2\ones}{n+1}+ \frac{\xfat}{2}\right)^\trans-\left(\frac{G_2\ones}{n+1}+ \frac{\xfat}{2}\right)\ones^\trans+c\ones\ones^\trans
\end{align*}
where $c=c_1+c_2$.

Last, we consider the condition \eqref{MP2}. Using that $AA^+=I-\frac{1}{n+1}\ones\ones^\trans$ and then inserting the above expression of $A^+$ into $A^+AA^+$  yield
\begin{align*}
A^+AA^+&=A^+\left(I-\frac{1}{n+1}\ones\ones^\trans\right)\\
&=A^+-\frac{G_2+\xfat\xfat^\trans-\ones \left(\frac{G_2\ones}{n+1}+ \frac{\xfat}{2}\right)^\trans-\left(\frac{G_2\ones}{n+1}+ \frac{\xfat}{2}\right)\ones^\trans+c\ones\ones^\trans}{n+1}\ones\ones^\trans\\
&=A^++\left(\frac{\ones^\trans G_2\ones}{(n+1)^2}+ \frac{1}{4}-c\right)\ones\ones^\trans,
\end{align*}
where we have used that $\ones^\trans\ones=n+1$, that
$\xfat^\trans \ones=\frac{n+1}{2}$ and that $\ones^\trans G_2\ones$ is a scalar. 
To satisfy \eqref{MP2}, we need $c=\frac{\ones^\trans G_2\ones}{(n+1)^2}+ \frac{1}{4}$, that is
\begin{align*}
A^+&=G_2+\xfat\xfat^\trans-\ones \left(\frac{G_2\ones}{n+1}+ \frac{\xfat}{2}\right)^\trans-\left(\frac{G_2\ones}{n+1}+ \frac{\xfat}{2}\right)\ones^\trans+\left(\frac{\ones^\trans G_2\ones}{(n+1)^2}+ \frac{1}{4}\right)\ones\ones^\trans\\
&=G_2- \ones\frac{\ones^\trans G_2}{n+1}-\frac{G_2\ones}{n+1}\ones^\trans+\frac{\ones^\trans G_2\ones}{(n+1)^2}\ones\ones^\trans+\xfat\xfat^\trans-\ones\frac{\xfat^\trans}{2}-\frac{\xfat}{2}\ones^\trans+ \frac{1}{4}\ones\ones^\trans\\
&=\left(I-\frac{\ones\ones^\trans}{n+1}\right)G_2\left(I-\frac{\ones\ones^\trans}{n+1}\right)+\left(\xfat- \frac{\ones}{2}\right)\left(\xfat- \frac{\ones}{2}\right)^\trans,
\end{align*}
which we recognize from \eqref{MPinvofA}.
\end{proof}

We remark that the explicit expression of $G_2$ was derived in \cite{InversesEriksson2020preprint} for second and fourth order accurate SBP operators.

\subsection{Pseudoinverse with filtering}

\newcommand{\tmpvec}{\mathbf{z}}
\newcommand{\arbvec}{\mathbf{y}}
\newcommand{\vB}{\mathbf{v}_B}


The solution $\discsol_{\text{\tiny MP}}\equiv A^+\rhs$ produced by the Moore--Penrose inverse $A^+$ has meanvalue equal to zero (i.e. the property $\ones^\trans\discsol_{\text{\tiny MP}}=0$), since $A^+A=I-\frac{1}{n+1}\ones\ones^\trans$.
Below, we present an alternative approach  by first deriving a pseudoinverse and then using a filtering process to obtain the same solution $\discsol_{\text{\tiny MP}}$.

Let $B$ be a candidate for acting as a pseudo-inverse. We multiply the system \eqref{Avb} by this matrix $B$ from the left, yielding
$BA\discsol=B\rhs$. We demand that $BA\discsol=\discsol+\ones \tmpvec^\trans\discsol$ where $\tmpvec$ is an unknown vector to be determined. If this  demand is fulfilled,  $\vB=B\rhs$ will be equal to the Moore-Penrose solution plus some constant given by $\tmpvec^\trans\discsol$.

We thus need $B$ such that $BA=I+\ones\tmpvec^\trans$. Making the ansatz $B=G_2+C$, with $G_2$ from \eqref{G2}, yields
\begin{align*}
BA&=(G_2+C)A\\
&=I-(\ones-\xfat )\el^\trans-\xfat\er^\trans +CA\\
&=I-\ones \el^\trans-\xfat(\er-\el)^\trans +CA
\end{align*}
where we have used \eqref{AG} and that both $A$ and $G_2$ are symmetric. Based on \eqref{A1Ax} we let $C=\xfat\xfat^\trans+\ones\arbvec^\trans$, where $\arbvec$ is arbitrary, since this leads to
\begin{align*}
BA&=I+\ones(\arbvec^\trans A- \el^\trans),
\end{align*}
fulfilling our initial requirement with $\tmpvec=A\arbvec-\el$. 
That is, we have
\begin{align}\label{altpsinv}
B=G_2+\xfat\xfat^\trans+\ones\arbvec^\trans.
\end{align}



Choosing $\arbvec=0$ minimizes the computations needed to obtain $B$, and thereby $\vB$.
Thereafter subtracting the average of the solution from itself  gives the Moore-Penrose solution: 
\begin{align*}
 \vB-\frac{1}{\ones^\trans\ones}\ones\ones^\trans \vB.
\end{align*}
Comparing with using the analytical expression of $A^+$, the above approach may be more computational efficient when the right hand side vector $\rhs$ has compact support, because in this case only a small part of $G_2$ needs to be formed.







\section{The relation between $\operatorname{rank}(\Abar)$ and $\operatorname{rank}(A)$}
\label{ranks}

In order for Theorem~\ref{ThmMP} to make any sense, it is necessary that $\Abar$ in $G_2$ in \eqref{G2} is non-singular. In \cite{InversesEriksson2020preprint}, this is proven directly for the second and fourth order accurate operators.
However, in Section~\ref{SecGenA} we show that there is one free parameter in the sixth order accurate operator, and that it is possible to tune this free parameter such that all SBP properties are satisfied but $\Abar$ is singular. Below, we first present a theorem relating the rank of $\Abar$ and $A$. 

\begin{theorem}
\label{ThmRankRelationA}
Consider the $(n+1)\times(n+1)$ symmetric matrix $A$ in \eqref{SBP_D2}, which fulfills the consistency constraints \eqref{A1Ax}. Further, consider its $(n-1)\times(n-1)$ submatrix $\Abar$ in \eqref{Aparts}. It holds that
\begin{align*}
\operatorname{rank}(A)=    \operatorname{rank}(\Abar)+1.
\end{align*}
\end{theorem}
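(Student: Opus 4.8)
The plan is to work directly with the column space of $A$ and to exploit only the consistency relations \eqref{A1Ax}, which hold whether or not $\Abar$ is singular. The starting point is the observation, read off from the middle-block rows of \eqref{A1Axcomponents}, that
\[
\Aeast=-\Abar\xvec,\qquad \Awest=\Abar(\xvec-\ett).
\]
Thus both border vectors $\Awest$ and $\Aeast$ lie in the column space of $\Abar$. This is the structural fact that will replace the identity \eqref{AG}: whereas \eqref{AG} needed $\Abar^{-1}$ to exist, here I only use the containment $\Awest,\Aeast\in\operatorname{col}(\Abar)$, which holds unconditionally.

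Denote the columns of $A$ by $\mathbf{c}_0,\mathbf{c}_1,\dots,\mathbf{c}_n$, where $\mathbf{c}_1,\dots,\mathbf{c}_{n-1}$ are the middle columns. First I would use $A\ones=0$, which reads $\mathbf{c}_0+\sum_{j=1}^{n-1}\mathbf{c}_j+\mathbf{c}_n=0$, to drop the redundant column $\mathbf{c}_0$, so that $\operatorname{col}(A)=\operatorname{span}(\mathbf{c}_1,\dots,\mathbf{c}_n)$. Next I would use $A\xfat=\er-\el$, which reads $\sum_{j=1}^{n-1}(\xvec)_j\,\mathbf{c}_j+\mathbf{c}_n=\er-\el$, to swap $\mathbf{c}_n$ for $\er-\el$ in the spanning set, giving
\[
\operatorname{col}(A)=\operatorname{span}\big(\mathbf{c}_1,\dots,\mathbf{c}_{n-1},\,\er-\el\big).
\]

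The core of the argument is then to count the dimension of this span. Writing each middle column as $\mathbf{c}_j=((\Awest)_j,\,\Abar_{\cdot j},\,(\Aeast)_j)^\trans$, a combination $\sum_j\alpha_j\mathbf{c}_j$ has middle block $\Abar\vec\alpha$ and top and bottom entries $\Awest^\trans\vec\alpha$ and $\Aeast^\trans\vec\alpha$. Substituting $\Awest=\Abar(\xvec-\ett)$ and $\Aeast=-\Abar\xvec$ and using that $\Abar$ is symmetric, these border entries become $(\xvec-\ett)^\trans\Abar\vec\alpha$ and $-\xvec^\trans\Abar\vec\alpha$, so $\Abar\vec\alpha=0$ forces the whole vector to vanish. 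Hence the map $\vec\alpha\mapsto\sum_j\alpha_j\mathbf{c}_j$ has kernel exactly $\ker(\Abar)$, and its image has dimension $(n-1)-\dim\ker(\Abar)=\operatorname{rank}(\Abar)$. Finally I would show $\er-\el$ is not in this span: its middle block is zero, which would force $\Abar\vec\beta=0$ for any representing coefficients, and by the same characterization the combination would then equal $0\neq\er-\el$. Adding $\er-\el$ therefore raises the dimension by exactly one, yielding $\operatorname{rank}(A)=\operatorname{rank}(\Abar)+1$.

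The main obstacle is precisely the loss of $\Abar^{-1}$: once $\Abar$ is allowed to be singular, the reasoning behind $G_2$ and \eqref{AG} in \cite{InversesEriksson2020preprint} is unavailable, and the real danger is that the bordering rows and columns might contribute rank beyond what $\Abar$ already carries, or that $\er-\el$ might accidentally fall into the span of the middle columns. The crux that removes both dangers is the containment $\Awest,\Aeast\in\operatorname{col}(\Abar)$, which \emph{slaves} the border entries to the middle block and makes the top and bottom components vanish automatically on $\ker(\Abar)$. I would check the two independence claims most carefully, since they are exactly where the singular case departs from the invertible one.
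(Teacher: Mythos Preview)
Your argument is correct. The key observation---that the border vectors satisfy $\Awest=\Abar(\xvec-\ett)$ and $\Aeast=-\Abar\xvec$, hence lie in $\operatorname{col}(\Abar)$---is exactly what makes the top and bottom entries of $\sum_j\alpha_j\mathbf{c}_j$ vanish whenever the middle block does, and your independence check for $\er-\el$ is clean.

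The paper takes a different route: it writes down an explicit invertible matrix $Z$ (with rows built from $\ett$ and $\xvec$) and computes the congruence $ZAZ^\trans=\operatorname{diag}(0,\Abar,1)$, then invokes Sylvester's law of inertia to conclude $\operatorname{rank}(A)=\operatorname{rank}(\Abar)+1$. Underneath, the two proofs use the same structural relations from \eqref{A1Axcomponents}: the rows of $Z$ encode precisely the column operations you perform by hand (eliminate $\mathbf{c}_0$ via $\ones$, isolate $\er-\el$ via $\xfat$). The paper's version is more compact and incidentally yields the full inertia of $A$, not just its rank; yours is more elementary---it needs only rank--nullity and avoids any appeal to congruence invariants---and it makes explicit why singularity of $\Abar$ does not spoil the count, which is the point of the theorem.
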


\newcommand{\Sylv}{Z}
\newcommand{\RotA}{\Delta}

\begin{proof}[Proof of Theorem~\ref{ThmRankRelationA}]

We multiply $A$ in \eqref{Aparts} by $\Sylv$ from the left and by $\Sylv^\trans$ from the right, where the non-singular matrix $\Sylv$ is specified below. This gives us a resulting matrix $\RotA$, as
\begin{align}\label{Bsylv}
\RotA=\Sylv A\Sylv^\trans&=\left[\begin{array}{ccc}1&\ett^\trans&1\\0&\Ibar&0\\0&\xvec^\trans&1\end{array}\right]\left[\begin{array}{ccc}\Abeg&\Awest^\trans&\Aoff\\\Awest&\Abar&\Aeast\\\Aoff&\Aeast^\trans&\Aend\end{array}\right]\left[\begin{array}{ccc}1&0&0\\\ett&\Ibar&\xvec\\1&0&1\end{array}\right]
=\left[\begin{array}{ccc}0&0&0\\0&\Abar&0\\0&0&1\end{array}\right]
\end{align}
where $\ett=[1\ 1\ 1\ \hdots\ 1]^\trans$ and $\xvec=[x_1\ x_2\ \hdots\ x_{n-1}]^\trans$ are shorter versions of $\ones$ and $\xfat$ in \eqref{onesandxfat}, and where $\Ibar$ is the $(n-1)\times(n-1)$ identity matrix.
 To arrive to the right-hand expression, we have used the relations in \eqref{A1Axcomponents}.
Since the rank of $A$ does not change under the above transformation, $\operatorname{rank}(\RotA)=\operatorname{rank}(A)$ must hold (due to Sylvester's law of inertia). On the other hand, $\RotA$ computed in \eqref{Bsylv} is a block diagonal matrix with entries $\Abar$ and $1$, and consequently $\operatorname{rank}(\RotA)=\operatorname{rank}(\Abar)+1$. We conclude that $\operatorname{rank}(A)=\operatorname{rank}(\Abar)+1$.

\end{proof}

\begin{corollary}\label{CorRank}
In particular, from  Theorem~\ref{ThmRankRelationA} it directly follows that
\begin{align*}
  \operatorname{rank}(A)=n
  &&\Longleftrightarrow&&
  \operatorname{rank}(\Abar)=n-1.
\end{align*}
That is, $A$ has exactly one zero eigenvalue if and only if $\Abar$ is invertible.
\end{corollary}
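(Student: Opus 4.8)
The plan is to observe that Corollary~\ref{CorRank} is an immediate specialization of Theorem~\ref{ThmRankRelationA}, so the only work is to unwind what the rank equality says in the two boundary cases and to connect the phrase ``exactly one zero eigenvalue'' to the rank statement. First I would instantiate Theorem~\ref{ThmRankRelationA} at $\operatorname{rank}(\Abar)=n-1$: since $\Abar$ is $(n-1)\times(n-1)$, having rank $n-1$ is exactly the statement that $\Abar$ is invertible, i.e.\ that $G_2$ in \eqref{G2} exists. Plugging $\operatorname{rank}(\Abar)=n-1$ into $\operatorname{rank}(A)=\operatorname{rank}(\Abar)+1$ gives $\operatorname{rank}(A)=n$. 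Conversely, if $\operatorname{rank}(A)=n$ then the theorem forces $\operatorname{rank}(\Abar)=n-1$, hence $\Abar$ invertible. This establishes the biconditional displayed in the corollary.

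Next I would translate ``$\operatorname{rank}(A)=n$'' into the eigenvalue language. Because $A$ is a symmetric (hence diagonalizable) $(n+1)\times(n+1)$ matrix, its rank equals $n+1$ minus the dimension of its nullspace, and the nullspace dimension equals the algebraic multiplicity of the eigenvalue $0$. So $\operatorname{rank}(A)=n$ is equivalent to $\dim\ker A=1$, i.e.\ $0$ is an eigenvalue of multiplicity exactly one. One should also note that $A$ always has $0$ as an eigenvalue: by \eqref{A1Ax} we have $A\ones=0$, so $\ker A$ is at least one-dimensional and $\operatorname{rank}(A)\le n$ unconditionally; the content of the corollary is that equality $\operatorname{rank}(A)=n$ (equivalently, the nullspace is spanned by $\ones$ alone, with no extra zero modes) is precisely characterized by invertibility of $\Abar$. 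I would spell this out in one or two sentences so the reader sees why ``exactly one zero eigenvalue'' is the right reading rather than ``at least one''.

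There is essentially no obstacle here: the corollary is a direct logical consequence of the already-proved theorem together with the elementary fact that for a symmetric (or more generally diagonalizable) matrix the geometric multiplicity of $0$ equals $(n+1)-\operatorname{rank}$. The only point requiring a word of care is the symmetry/diagonalizability of $A$, which is part of the standing hypothesis in \eqref{SBP_D2} and restated in Theorem~\ref{ThmRankRelationA}, so invoking it is legitimate. Thus the proof is a short two-line deduction: apply Theorem~\ref{ThmRankRelationA}, read off the rank equivalence, and convert rank to nullity via the spectral theorem.
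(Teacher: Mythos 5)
Your proposal is correct and follows exactly the route the paper intends: the corollary is read off directly from Theorem~\ref{ThmRankRelationA} by substituting $\operatorname{rank}(\Abar)=n-1$, and the eigenvalue rephrasing uses the symmetry of $A$ so that rank equals the number of nonzero eigenvalues. Your additional remark that $A\ones=0$ forces $\operatorname{rank}(A)\le n$ unconditionally is a helpful clarification but does not change the argument.
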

For the existence of $G_2$ in Theorem~\ref{ThmMP}, $\Abar$ needs to be non-singular and
from Corollary~\ref{CorRank}, we know that $\Abar$ 
is non-singular if and only if $A$ has exactly one zero eigenvalue. This rises 
the question whether or not $A$ can have more than one zero eigenvalue and still fulfill \eqref{SBP_D2}.

Recall that for the second and fourth order accurate operators, it was proven in \cite{InversesEriksson2020preprint} that the related $\Abar$ is non-singular, which together with Corollary~\ref{CorRank} proves that $\operatorname{rank}(A) = n$. This is often assumed to be the case, but to the best of our knowledge this has not been shown before. We summarize this result in the following corollary.
\begin{corollary}\label{CorRank24}
For the second and fourth order second derivative SBP operators constructed in \cite{Mattsson2004503}, the corresponding matrix $A$ has exactly one zero eigenvalue.
\end{corollary}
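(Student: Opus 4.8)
The plan is to leverage Corollary~\ref{CorRank}, which has just reduced everything to a statement about the submatrix $\Abar$: it suffices to show that for the second and fourth order accurate $D_2$ of \cite{Mattsson2004503}, the interior block $\Abar$ in the decomposition \eqref{Aparts} is non-singular. Since \cite{InversesEriksson2020preprint} explicitly exhibits $G_2$ (equivalently, $\Abar^{-1}$) for these two operators, the corollary follows immediately: the mere existence of that explicit inverse certifies $\operatorname{rank}(\Abar)=n-1$, and then Theorem~\ref{ThmRankRelationA} (or Corollary~\ref{CorRank}) upgrades this to $\operatorname{rank}(A)=n$, i.e. $A$ has exactly one zero eigenvalue. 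So at the level of citation the proof is essentially a one-line deduction.

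If one instead wants a self-contained argument that does not merely quote the companion paper, the natural route is the following. First, write down the boundary closure of $A$ for $p=1$ and $p=2$ explicitly (the $2\times2$ and $4\times4$ corner blocks), recalling that $A$ is symmetric positive semi-definite with $A\ones=0$ and $A\xfat=\er-\el$ by \eqref{A1Ax}. Second, observe that the interior of $A$ is (up to the scaling by $H$) the standard narrow-stencil second-difference operator, so $\Abar$ is a small perturbation, at its first and last few rows/columns, of the classical tridiagonal matrix $\operatorname{tridiag}(-1,2,-1)$, which is symmetric positive definite; one then argues that the positive semi-definiteness of the full $A$ forces $\Abar$ to be positive \emph{definite}. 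Concretely, if $\Abar \mathbf{z}=0$ for some nonzero interior vector $\mathbf{z}$, embed it as $\discsol=(0,\mathbf{z},0)^\trans$; then $\discsol^\trans A\discsol = \mathbf{z}^\trans\Abar\mathbf{z}=0$, so $\discsol$ lies in the nullspace of the positive semi-definite $A$, hence $A\discsol=0$; comparing the first and last components of $A\discsol=0$ against the structure in \eqref{A1Axcomponents} (namely $\Awest^\trans\mathbf{z}=0$ and $\Aeast^\trans\mathbf{z}=0$) together with knowledge that $\ker A$ is spanned by $\ones$ — which \emph{is} essentially the thing we are trying to prove — gives a contradiction. To avoid circularity, one shows directly from the explicit corner entries that $\ker A = \operatorname{span}\{\ones\}$: any $\discsol\in\ker A$ must be killed by the interior tridiagonal stencil, forcing $\discsol$ to be affine, $\discsol = a\ones + b\xfat$; then $A\xfat = \er-\el \neq 0$ forces $b=0$, so $\discsol \in \operatorname{span}\{\ones\}$, and since $\ones$ has nonzero first and last entries it cannot be of the form $(0,\mathbf{z},0)^\trans$, contradicting $\mathbf{z}\neq0$. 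This establishes $\operatorname{rank}(\Abar)=n-1$.

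The one genuine obstacle in the self-contained version is verifying that the interior difference stencil of the narrow-stencil $A$ really does annihilate only affine grid functions on the interior index set — i.e. that the banded interior part of $A$, restricted to $\Abar$, has the same kernel behaviour as $\operatorname{tridiag}(-1,2,-1)$. For the second order operator $\Abar$ literally is (a scaling of) that tridiagonal matrix, so there is nothing to check. For the fourth order operator the interior stencil is the pentadiagonal $\tfrac{1}{12}(-1,16,-30,16,-1)$-type stencil with modified boundary rows, and one must check that the modified rows near the two corners do not introduce an extra nullvector; this is a finite computation — writing out the $4\times4$ corner blocks from \cite{Mattsson2004503} and checking a rank — rather than a conceptual difficulty. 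Given that \cite{InversesEriksson2020preprint} already carries out exactly this bookkeeping and produces $G_2$ in closed form, the cleanest exposition is simply to cite that result and invoke Corollary~\ref{CorRank}; the remark already appended after Theorem~\ref{ThmMP} points the reader there. I would therefore present the short citation-based proof in the body and relegate the explicit corner-block verification to a reference.
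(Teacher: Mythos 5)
Your main argument is exactly the paper's proof: cite \cite{InversesEriksson2020preprint} for the non-singularity of $\Abar$ (via its explicit inverse $G_2$) for the second and fourth order operators, then apply Corollary~\ref{CorRank} to conclude $\operatorname{rank}(A)=n$, i.e.\ exactly one zero eigenvalue. The additional self-contained sketch you outline is not needed and is not part of the paper's argument, but the citation-based deduction you lead with is correct and identical in approach.
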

However, for higher order accurate operators we will demonstrate below that it is actually possible to construct $A$ from \eqref{SBP_D2} such that it has more than one zero eigenvalue. 

\section{The sixth order second derivative operator}
\label{SecGenA}

As already mentioned, the inverse of $\Abar$ was derived in \cite{InversesEriksson2020preprint} for operators of second and fourth order of accuracy, thereby guaranteeing that $\Abar$ has full rank and consequently that $\operatorname{rank}(A)=n$. Higher order operators, on the other hand, have free parameters. Is it 
possible to tune these parameters such that $A$ has more than one zero eigenvalue?
We will investigate the sixth order accurate operator, which turns out to have one free parameter.

\subsection{Construction of the sixth order operator $D_2$ with a free parameter}
\label{constructA}

\newcommand{\free}{\alpha}


\newcommand{\aaabas}{c}
\newcommand{\ccbas}{d}
\newcommand{\aaa}{\aaabas_{00}}%
\newcommand{\aab}{\aaabas_{01}}%
\newcommand{\aac}{\aaabas_{02}}%
\newcommand{\aad}{\aaabas_{03}}%
\newcommand{\aae}{\aaabas_{04}}%
\newcommand{\aaf}{\aaabas_{05}}%
\newcommand{\aba}{\aaabas_{10}}%
\newcommand{\abb}{\aaabas_{11}}%
\newcommand{\abc}{\aaabas_{12}}%
\newcommand{\abd}{\aaabas_{13}}%
\newcommand{\abe}{\aaabas_{14}}%
\newcommand{\abf}{\aaabas_{15}}%
\newcommand{\aca}{\aaabas_{20}}%
\newcommand{\acb}{\aaabas_{21}}%
\newcommand{\acc}{\aaabas_{22}}%
\newcommand{\acd}{\aaabas_{23}}%
\newcommand{\ace}{\aaabas_{24}}%
\newcommand{\acf}{\aaabas_{25}}%
\newcommand{\ada}{\aaabas_{30}}%
\newcommand{\adb}{\aaabas_{31}}%
\newcommand{\adc}{\aaabas_{32}}%
\newcommand{\add}{\aaabas_{33}}%
\newcommand{\ade}{\aaabas_{34}}%
\newcommand{\adf}{\aaabas_{35}}%
\newcommand{\aea}{\aaabas_{40}}%
\newcommand{\aeb}{\aaabas_{41}}%
\newcommand{\aec}{\aaabas_{42}}%
\newcommand{\aed}{\aaabas_{43}}%
\newcommand{\aee}{\aaabas_{44}}%
\newcommand{\aef}{\aaabas_{45}}%
\newcommand{\afa}{\aaabas_{50}}%
\newcommand{\afb}{\aaabas_{51}}%
\newcommand{\afc}{\aaabas_{52}}%
\newcommand{\afd}{\aaabas_{53}}%
\newcommand{\afe}{\aaabas_{54}}%
\newcommand{\aff}{\aaabas_{55}}%
\newcommand{\ca}{\ccbas_{0}}%
\newcommand{\cb}{\ccbas_{1}}%
\newcommand{\cc}{\ccbas_{2}}%
\newcommand{\cd}{\ccbas_{3}}%


The narrow-stencil SBP operators constructed in 
 \cite{Mattsson2004503} are designed
such that the following requirements 
are fulfilled:
\begin{subequations}\label{DemandsOnA}
\begin{align}
\label{DemandsOnA(i)}
\bullet\ &\text{The diagonal matrices $H$ in  Definitions~\ref{defD1} and \ref{defD2} are identical.}\\
\label{DemandsOnA(ii)}
\bullet\ &\text{$D_2$ (and thus $A$) has minimal interior bandwidth (the interior stencil has $2p+1$ elements)}.\\
\label{DemandsOnA(iii)}
\bullet\ &\text{The order of accuracy of $D_2$ is $2p$ in the interior and $p$ near the boundaries.}\\
\label{DemandsOnA(iv)}
\bullet\ &\text{The order of accuracy of $\dselr$ is $p+1$.}\\
\label{DemandsOnA(v&vi)}
\bullet\ &\text{$A$ is symmetric positive semi-definite, that is $A=A^\trans\geq0$.}
\end{align}
\end{subequations}
%
Below, we derive the sixth order diagonal-norm $D_2$ fulfilling \eqref{DemandsOnA}, revealing that it has a free parameter that is not presented in \cite{Mattsson2004503}. The existence of the free parameter was mentioned in \cite{ Sjogreen2012} but no such operator was reported.


In the sixth order accurate case, the ansatz for the top-left corner of $A$ is
\begin{align}\label{ansatzA}
A=\frac{1}{h}\left[\begin{array}{cccccccccccc}
\aaa&\aab&\aac&\aad&\aae&\aaf\\
\aba&\abb&\abc&\abd&\abe&\abf\\
\aca&\acb&\acc&\acd&\ace&\acf\\
\ada&\adb&\adc&\add&\ade&\adf&\cd\\
\aea&\aeb&\aec&\aed&\aee&\aef&\cc&\cd\\
\afa&\afb&\afc&\afd&\afe&\aff&\cb&\cc&\cd\\
&&&\cd&\cc&\cb&\ca&\cb&\cc&\cd\\
&&&&\ddots&\ddots&\ddots&\ddots&\ddots&\ddots&\ddots
\end{array}\right],
\end{align}
and a corresponding ansatz is also made for the bottom-right corner.
The width of the interior stencil is set to 7 elements by \eqref{DemandsOnA(ii)}, and the coefficients $\ca$, $\cb$, $\cc$ and $\cd$ are known from requiring 
sixth order accuracy in the interior according to \eqref{DemandsOnA(iii)}, 
yielding the interior stencil 
\begin{align*}
\left[\begin{array}{ccccccc}\cd&\cc&\cb&\ca&\cb&\cc&\cd\\\end{array}\right]=\frac{1}{180}\left[\begin{array}{ccccccc}-2&27&-270&490&-270&27&-2\\\end{array}\right].
\end{align*}
Demanding symmetry, \eqref{DemandsOnA(v&vi)}, 
gives us $\aaabas_{ij}=\aaabas_{ji}$ and reduce the number of unknowns $\aaabas_{ij}$ to 21 for $0\leq i\leq j\leq5$.

Next, we require accuracy at the boundary of $D_2$ and $\dselr$, that is \eqref{DemandsOnA(iii)} and \eqref{DemandsOnA(iv)}. Let $\smooth(x)$ denote an arbitrary smooth function in $\Omega$, and let $\restrict=[\smooth(x_0)\ \smooth(x_1)\ \hdots\ \smooth(x_{n})]^\trans$ 
be its restriction to the grid. 
Similarly, let $\restrict^{(m)}$  denote the restriction of the $m$th 
derivative $\smooth^{(m)}$ 
to the grid. 
At the boundary, we thus need
\begin{align*}
D_2\restrict=\restrict^{(2)}+\mathcal{O}(h^3),&&\dsel^\trans\restrict = \smooth_x(0)+\mathcal{O}(h^4) ,&&\dser^\trans\restrict = \smooth_x(1)+\mathcal{O}(h^4).
\end{align*}
We now need to choose the coefficients $\aaabas_{ij}$ in $A$ such that the above relations are fulfilled.
From Definition~\ref{defD2}, we get $A\restrict=-H D_2\restrict+\er\dser^\trans\restrict-\el\dsel^\trans\restrict$, that is, $A$ and $H$ must be such that
\begin{align}\label{demandsonA}
A\restrict
=-H \restrict^{(2)}+\er\smooth_x(1)-\el \smooth_x(0)+\mathcal{O}(h^4)
\end{align}
holds (recall that the elements of $H$ are proportional to $h$).
We require that the first six rows of 
\eqref{demandsonA} hold for any smooth function $w(x)$, 
and consider
the  Taylor expansions of  $(A\restrict)_j$ around $\smooth(x_j)$ for $j=0,1,2,3,4,5$. As an example, the first row of \eqref{demandsonA} leads to the following four conditions:
\begin{align*}
\aaa+\aab+\aac+\aad+\aae+\aaf&=0,&\aab+2\aac+3\aad+4\aae+5\aaf&=-1,\\
\aab+8\aac+27\aad+64\aae+125\aaf&=0,&\aab+16\aac+81\aad+256\aae+625\aaf&=0,
\end{align*}
corresponding to the first, second, fourth and fifth term in the Taylor expansion. Note that we have not yet put any demands on the third term of the Taylor expansion, which involves $\restrict^{(2)}$ and thus the SBP norm $H$. This   will be checked later (by checking if $h_0=-(\aab+4\aac+9\aad+16\aae+25\aaf)\frac{h}{2}$, where $h_0$ is 
the first diagonal element of $H$, coincide with the norm for the first derivative). 

All in all, requiring  the above accuracy demands, puts 4 demands on each of the top six rows of $A$, that is 24 demands in total. Combined with the symmetry requirement, these demands can be written as a linear system of equations with 24 equations and 21 unknowns $\aaabas_{ij}$. For completeness, this system is presented explicitly in Appendix~\ref{Syst2421}.
However, these equations turn out to be linearly dependent, four of them being superfluous. The remaining 20 demands are without conflicts, and result in the top-left corner of $A$ being
\begin{align}\label{A6}\begin{split}
A_{6\times6}
&=\frac{1}{180h}\scalebox{.93}{$\left[\begin{array}{cccccc}\frac{-19697}{72}&\frac{2098907}{960}&\frac{-3475609}{720}&\frac{6987397}{1440}&\frac{-193649}{80}&\frac{278033}{576}\\\frac{2098907}{960}&\frac{-839647}{72}&\frac{6921397}{288}&\frac{-387859}{16}&\frac{6969449}{576}&\frac{-1739359}{720}\\ \frac{-3475609}{720} & \frac{6921397}{288} & \frac{-577009}{12} & \frac{6943085}{144} & \frac{-3481031}{144} & \frac{2321591}{480}\\ \frac{6987397}{1440} & \frac{-387859}{16} & \frac{6943085}{144} & \frac{-1726033}{36}& \frac{2298631}{96} & \frac{-3473101}{720}\\\frac{ -193649}{80} & \frac{6969449}{576}& \frac{-3481031}{144} & \frac{2298631}{96}& \frac{-104756}{9} & \frac{6235729}{2880}\\ \frac{278033}{576} & \frac{-1739359}{720} & \frac{2321591}{480} & \frac{-3473101}{720} & \frac{6235729}{2880} & 0 \end{array}\right]$}+\frac{\free}{180h}\scalebox{.93}{$\left[\hspace{-2pt}\begin{array}{c}1\\-5\\10\\-10\\5\\-1\end{array}\hspace{-2pt}\right]\left[\hspace{-2pt}\begin{array}{c}1\\-5\\10\\-10\\5\\-1\end{array}\hspace{-2pt}\right]^\trans$}
\end{split}
\end{align}
where $\free$ is a free parameter.

We also need to control the norm.
We compute the first six diagonal elements of $H$
\begin{align*}
\left[\begin{array}{l}
h_{0}\\h_{1}\\h_{2}\\h_{3}\\h_{4}\\h_{5}\end{array}\right]=-\frac{h}{2}\left[\begin{array}{l}
\aab+4\aac+9\aad+16\aae+25\aaf\\
\aba+\abc+4\abd+9\abe+16\abf\\
4\aca+\acb+\acd+4\ace+9\acf\\
9\ada+4\adb+\adc+\ade+4\adf+9\cd\\
16\aea+9\aeb+4\aec+\aed+\aef+4\cc+9\cd\\
25\afa+16\afb+9\afc+4\afd+\afe+\cb+4\cc+9\cd\end{array}\right]=
\frac{h}{43200}
\left[\begin{array}{l}
13649\\60065\\27110\\53590\\39385\\43801\end{array}\right],
\end{align*}
where $H$ turns out to be independent of the free parameter and -- more importantly -- comparing with \cite{ref:STRA94} we see that it is equal to the norm used for the first derivative. The requirement \eqref{DemandsOnA(i)} is thus fulfilled automatically.

With \eqref{A6} and its bottom-right corner counterpart inserted into \eqref{ansatzA}, we have constructed a matrix $A$ fulfiling all demands 
\eqref{DemandsOnA(i)}, \eqref{DemandsOnA(ii)}, \eqref{DemandsOnA(iii)}, \eqref{DemandsOnA(iv)} and 
symmetry from  \eqref{DemandsOnA(v&vi)}.
It remains to make sure $A\geq0$, that is fulfilling the positive semi-definiteness  demand from \eqref{DemandsOnA(v&vi)}. 
Note that in \cite{Mattsson2004503}, no free parameter is presented, with their $A$ obtained by choosing $\free=490$. 
In this case $A\geq0$, 
as a consequence of Proposition 3.5 in \cite{Mattsson2008b} (since $R\geq0$ for $R$ in Definition~\ref{DefComp} implies $A\geq0$).
Furthermore, empirical evidence shows that this particular matrix $A$ has one and only one zero eigenvalue. We formalize this in the following assumption: 
\begin{assumption}\label{rankAn}
For $\free=490$, $A$ is positive semi-definite, with $\operatorname{rank}(A)=n$.
\end{assumption}

In the following subsection, we investigate for what values of $\free$ it holds $A\geq 0$. As will be seen, it is possible that $A\geq 0$ has more than one zero eigenvalue.

\subsection{Rank of $A$}

\newcommand{\Abase}{A_0}
\newcommand{\extra}{K}
\newcommand{\extraev}{\mathbf{k}}

Based on \eqref{ansatzA} with \eqref{A6} -- and under the assumption that both boundary closures are equivalent -- we let $A=\Abase+\free\extra$ where $\Abase=A(\free=0)$. The matrix $\extra$ consists entirely of zeros, except its upper left $6\times6$ corner, $\extra_{6\times6}$ seen multiplied by $\free$ in \eqref{A6},
 and its lower right $6\times6$ corner.
 This matrix $\extra$, factorized as 
\begin{align*}
\extra=\frac{1}{180h}[\extraev_1 \extraev_2][\extraev_1 \extraev_2]^\trans,&&
\begin{array}{l}\extraev_1=\left[\begin{array}{ccccccccccccccc}1&-5&10&-10&5&-1&0&\hdots&0&0&0&0&0&0&0\end{array}\right]^\trans,\\
\extraev_2=\left[\begin{array}{ccccccccccccccc}
0&0&0&0&0&0&0&\hdots&0&-1&5&-10&10&-5&1\end{array}\right]^\trans,\end{array}
\end{align*}
is positive semi-definite with two eigenvalues  having the value $\frac{252}{180h}=\frac{1.4}{h}$ (associated with the eigenvectors $\extraev_{1,2}$), and with the rest of the eigenvalues 
being zero.
Note that $A=\free(\extra+\Abase/\free)$, and for sufficiently large values of $|\free|$ the eigenvalues of $A$ will be perturbed versions of the ones of $\free\extra$. 
That is,  we expect $A$ to have two eigenvalues scaling roughly as $\frac{1.4}{h}\free$ as $|\free|\to\infty$. Thus $A$ cannot be positive semi-definite for large negative values of $\free$. However, we know that  $A$  is positive semi-definite for $\free=490$. Since these two eigenvalues are continuous functions of $\free$ (they are the roots of the  characteristic polynomial of $A$, whose coefficients in turn depends continuously on $\free$), they will take the value zero for some value of $\free\in(-\infty,490)$. 

We investigate the precise values of $\free$ when $A$ has an extra zero eigenvalue (in addition to the expected zero eigenvalue associated with the eigenvector $\ones$). Since it is easier to investigate if a matrix is singular than to see if it has more than one zero eigenvalue, we will benefit from the relation between $A$ and $\Abar$. For symmetric matrices, the rank is equal to the number of nonzero eigenvalues. Thus, according to Corollary~\ref{CorRank}, $A$ has only one zero eigenvalue if $\Abar$ is non-singular. Equivalently, $A$ has more than one zero eigenvalue if $\Abar$ is singular.  The following proposition gives the precise limiting value of $\free$ that leads to a singular $\Abar$, thus more than one zero eigenvalue of $A$. 

\newcommand{\limit}{\free^\star}
\begin{proposition}\label{propfreelimit}
The matrix $A$ in Definition~\ref{defD2}, with upper left corner as indicated in \eqref{A6}, is positive semi-definite for $\free\geq\limit$, where
\begin{align}\label{freelimit}
 \limit\approx481.3408873321106,
\end{align}
for $n\geq21$ (for $n<21$, $\limit$ is slightly larger, with maximum $\limit\approx481.3410851822219$ for $n=11$).
Moreover, for $\free=\limit$, the matrix $A$ has two zero eigenvalues and one eigenvalue that is very close to zero.
\end{proposition}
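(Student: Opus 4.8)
The plan is to transfer the positive-semidefiniteness question to the interior block $\Abar$, where it becomes a rank-two positive perturbation problem, and then to pin down the critical value of $\free$ explicitly. Write $A=\Abase+\free\extra$ as in Section~\ref{SecGenA}, and let $\Abar_0,\bar\extra$ be the $(n-1)\times(n-1)$ matrices obtained from $\Abase,\extra$ by deleting the first and last row and column (as in \eqref{Aparts}), so that $\Abar=\Abar_0+\free\bar\extra$. The factorization of $\extra$ gives $\bar\extra=\frac{1}{180h}[\bar{\extraev}_1\ \bar{\extraev}_2][\bar{\extraev}_1\ \bar{\extraev}_2]^{\trans}$, where $\bar{\extraev}_{1,2}$ are $\extraev_{1,2}$ with their first and last entries deleted; hence $\bar{\extraev}_1$ is supported on the first five components and $\bar{\extraev}_2$ on the last five (disjointly once $n\ge11$), and $\bar\extra\ge0$ has rank at most two.

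I would first reduce to $\Abar$ via the congruence $\RotA=\Sylv A\Sylv^{\trans}$ of \eqref{Bsylv}: since $\RotA$ is block-diagonal with blocks $0,\Abar,1$, Sylvester's law of inertia shows $A$ and $\Abar$ have the same number of negative eigenvalues, so $A\ge0\iff\Abar\ge0$. Because $\bar\extra\ge0$, Weyl's monotonicity makes every eigenvalue of $\Abar$ nondecreasing in $\free$; with Assumption~\ref{rankAn} (which gives $\Abar\ge0$ at $\free=490$) this yields $\Abar\ge0$ for all $\free\ge490$, whereas the $\free\bar\extra$ term makes $\Abar$ indefinite for $\free$ sufficiently negative. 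The positive-semidefinite cone being closed, there is a finite threshold $\limit$ with $\Abar\ge0\iff\free\ge\limit$, hence $A\ge0\iff\free\ge\limit$; and $\Abar(\limit)$ must be singular, since otherwise it would be positive definite and would remain so just below $\limit$ by continuity, contradicting minimality. Therefore $\limit$ is a root of $\det\Abar(\free)$, a polynomial of degree at most two in $\free$ (because $\bar\extra$ has rank at most two) which is not identically zero ($\det\Abar(490)\ne0$ by Assumption~\ref{rankAn} and Corollary~\ref{CorRank}); so $\Abar$ is singular for at most two values of $\free$, and $\limit$ is the larger of them (tracking how the number of negative eigenvalues of $\Abar$ descends from two to zero).

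To extract the value, I would apply the matrix determinant lemma: for $\Abar_0$ nonsingular (a routine check),
\begin{align*}
\det\Abar(\free)=\det(\Abar_0)\,\det\!\big(I_2+\free S\big),\qquad
S=\tfrac{1}{180h}\,[\bar{\extraev}_1\ \bar{\extraev}_2]^{\trans}\Abar_0^{-1}[\bar{\extraev}_1\ \bar{\extraev}_2].
\end{align*}
Because the closure is invariant under $x\mapsto1-x$, the reversal permutation commutes with $\Abar_0$ and exchanges $\bar{\extraev}_1\leftrightarrow\bar{\extraev}_2$, so $S=\left(\begin{smallmatrix}s&t\\ t&s\end{smallmatrix}\right)$ with $s=\tfrac{1}{180h}\bar{\extraev}_1^{\trans}\Abar_0^{-1}\bar{\extraev}_1$ and $t=\tfrac{1}{180h}\bar{\extraev}_1^{\trans}\Abar_0^{-1}\bar{\extraev}_2$; thus $\Abar$ is singular exactly for $\free=-1/(s\pm t)$, and $\limit=\max\{-1/(s+t),-1/(s-t)\}$. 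Since $\bar{\extraev}_{1,2}$ are supported near the two boundaries, $s$ and $t$ involve only the corner blocks of $\Abar_0^{-1}$, which I would compute in exact rational arithmetic for each $n$ (or via the $n\to\infty$ semi-infinite limit); the cross term $t$, carrying the whole $n$-dependence, decays exponentially as the closures separate, so $\limit$ has stabilised to the stated sixteen digits once $n\ge21$ and is largest at the smallest admissible size $n=11$. Producing this sixteen-digit constant and rigorously controlling its $n$-dependence is the step I expect to be the real work; everything above it is structural.

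For the final claim: one checks $t\ne0$ for every finite $n$ (the relevant corner block of $\Abar_0^{-1}$ is not block-diagonal), so $-1/(s+t)\ne-1/(s-t)$ and $\limit$ is a simple root of $\det\Abar$. Then $\Abar(\limit)$ has a one-dimensional kernel, so $\operatorname{rank}(A(\limit))=n-1$ by Theorem~\ref{ThmRankRelationA}, and, since $A(\limit)\ge0$, it follows that $A(\limit)$ has exactly two zero eigenvalues. The one remaining small eigenvalue of $A(\limit)$ is the eigenvalue of $A$ that vanishes at the other root $-1/(s\mp t)$, which lies only $O(|t|)$ from $\limit$; as $|t|$ is minuscule, this eigenvalue is very close to, but not equal to, zero.
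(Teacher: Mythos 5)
Your proposal is correct and follows essentially the same route as the paper: reduce the question to the interior block $\Abar$, view the $\free$-dependence as a rank-two update, collapse $\det\Abar(\free)$ to a $2\times2$ determinant via the matrix-determinant (Weinstein–Aronszajn) identity, identify $\limit$ as the larger of the two roots, and transfer back to $A$ through the congruence underlying Theorem~\ref{ThmRankRelationA}; like the paper, the sixteen-digit value and its $n$-dependence ultimately rest on numerical evaluation. The differences are minor but worth recording. You anchor the rank-two update at $\Abarbase=\Abar(\free=0)$ and call its nonsingularity ``a routine check,'' whereas the paper anchors at $\Abarorg=\Abar(\free=490)$, whose positive definiteness is exactly what Assumption~\ref{rankAn} together with Corollary~\ref{CorRank} supplies — so the paper gets the required invertibility for free, while your version introduces one additional unproved (though numerically checkable) fact. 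On the other hand, your Weyl-monotonicity argument makes the threshold structure ($\Abar\geq0$ precisely for $\free\geq\limit$, with $\Abar(\limit)$ singular) cleaner than the paper's informal ``first sign change of the determinant,'' and your centrosymmetry observation, giving $S=\left(\begin{smallmatrix}s&t\\ t&s\end{smallmatrix}\right)$ and critical values $\free=-1/(s\pm t)$, is a structural refinement the paper does not exploit: it explains why the two critical values coalesce as the boundary closures decouple, which the paper only observes numerically in Table~\ref{Tab:ConvFree}. Your assertion that the coupling term $t$ decays exponentially in $n$ is plausible but unproved; the paper makes no such claim and simply reports the computed values, so on that point the two arguments have the same, essentially numerical, level of rigor.
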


\newcommand{\Abarbase}{\Abar_0}
\newcommand{\extrabar}{\bar{\extra}}


\newcommand{\Abarorg}{\Abar_{490}}
\newcommand{\delextra}{E}
\newcommand{\delextrabar}{\bar{E}}

\begin{proof}[Proof of Proposition~\ref{propfreelimit}]

\newcommand{\Ibarproof}{I_{n-1}}

We start by letting $\Abar=\Abarbase+\free\extrabar$ be the interior part of $A$, as indicated in \eqref{Aparts}. In the same sense $\extrabar$ is the interior part of $\extra$, and can be factorized as
\begin{align*}
\extrabar=\frac{1}{180h}\delextrabar\delextrabar^\trans,&&
\delextrabar=\left[\begin{array}{ccccccccccccc}-5&10&-10&5&-1&0&\hdots&0&0&0&0&0&0\\
0&0&0&0&0&0&\hdots&0&-1&5&-10&10&-5\end{array}\right]^\trans
\end{align*}
where $\delextrabar$  is an $(n-1)\times2$-matrix.
Next, let  $\Abarorg=\Abarbase+490\extrabar$ be the particular choice of $\Abar$ correlated to operator $A$ derived in \cite{Mattsson2004503}.
Combining the mentioned expressions leads to $\Abar=\Abarorg+(\free-490)\extrabar$.
From Assumption~\ref{rankAn} together with Corollary~\ref{CorRank}, we know that $\Abarorg$ is positive definite.

We compute the determinant of $\Abar$ to find out for what values of $\free$ it becomes zero. We have
\begin{align}\label{detAbar}\begin{aligned}
\det(\Abar)&=\det(\Abarorg+(\free-490)\extrabar)\\
&=\det(\Abarorg)\det\left(\Ibarproof+\frac{(\free-490)}{180h}\Abarorg^{-1}\delextrabar\delextrabar^\trans\right)\\
&=\det(\Abarorg)\det\left(I_2+\frac{(\free-490)}{180h}\delextrabar^\trans\Abarorg^{-1}\delextrabar\right)
\end{aligned}
\end{align}
where we have used the following two determinant properties
\begin{align*}
\det(MN) &= \det(M)\det(N),&& M, N  \text{ are square matrices of the same size}
\\
\det(I_{k}+MN^\trans)&=\det(I_{l}+N^\trans M),&&I_j \text{ is the }j\times j \text{ identity matrix}, M\text{ and }N\text{ are }k\times l\text{ matrices,}
\end{align*}
where the latter is the Weinstein–Aronszajn identity.
In \eqref{detAbar}, we have  used that the matrix $\Abarorg$ is invertible. 

We note that $\Abar$ is singular only if the determinant of the $2\times2$-matrix $I_2+\frac{(\free-490)}{180h}\delextrabar^\trans\Abarorg^{-1}\delextrabar$ in \eqref{detAbar} is zero.  We define $\limit$ as the value of $\free$ for which this occur, and 
compute 
\begin{align*}
0&=\det\left(I_2+\frac{(\limit-490)}{180h}\delextrabar^\trans\Abarorg^{-1}\delextrabar\right)\\
&=\left(\frac{(\limit-490)}{180h}\right)^2\det\left(\delextrabar^\trans\Abarorg^{-1}\delextrabar+\frac{180h}{(\limit-490)} I_2\right)\\
&=\left(\frac{(\limit-490)}{180h}\right)^2\det(\delextrabar^\trans\Abarorg^{-1}\delextrabar-\widetilde{\lambda} I_2)
\end{align*}
where $\widetilde{\lambda}=-\frac{180h}{(\limit-490)}$ are the eigenvalues of the $2\times2$-matrix $\delextrabar^\trans\Abarorg^{-1}\delextrabar$. To know when $\Abar$ is singular, we thus solve for $\limit$ in terms of $\widetilde{\lambda}$. We obtain that when
\begin{align*}
\limit=490-\frac{180h}{\widetilde{\lambda}},
\end{align*}
$\Abar$ is singular.
%
Since $\delextrabar^\trans\Abarorg^{-1}\delextrabar$ has two eigenvalues (which we compute numerically), there are two solutions $\limit$. These values vary slightly as a function of the number of grid points $n$, as shown in Table~\ref{Tab:ConvFree} in Appendix~\ref{Syst2421}.
From now on, we re-define $\limit$ such that it refers to the maximum of these two values. Then $\limit$ is the value when the determinant of $\Abar$ first changes sign, and is thus the smallest value of $\free$ such that $\Abar$ is positive definite (that is $\Abar$ is positive definite for $\free>\limit$ and positive semi-definite for $\free=\limit$).
For larger values of $n$ this distinction is irrelevant, since we note that both values of $\free$ converges to $481.3408873321106$ and are indistinguishable  in double precision for $n\geq21$. 

Theoretically, $\Abar$ is singular and positive semi-definite for $\free=\limit$, with one eigenvalue equal to zero and one eigenvalue very close to zero. However, in practice, $\Abar$ with $\free=\limit$ has two zero eigenvalues when $n\geq21$ (the second smallest eigenvalue of $\Abar$ with $\free=\limit$ decreases rapidly when $n$ increases).
Recalling Theorem~\ref{ThmRankRelationA} we now conclude that $A\geq0$ for $\free\geq\limit$.


\end{proof}

 A consequence of  Proposition~\ref{propfreelimit} is that, for $\free=\limit$, the matrix $A$ fulfills the properties of Definition~\ref{defD2} but fails to have $\operatorname{rank}(A)=n$. Instead, $A$ has rank $n-1$ (in practice, that is in numerical computations, the rank is rather $n-2$ for $n\geq21$).


As a numerical verification of the limit in \eqref{freelimit} and of Theorem~\ref{ThmRankRelationA}, we consider the eigenvalues of $A$, and see how they are influenced by the choice of $\free$.
In Figure~\ref{LargestSmallestEigOfA20}, the eigenvalues of $A$ are shown as a function of the free parameter $\free$ for $n=24$.
\begin{figure}[htb]\centering
\includegraphics[width=.6\textwidth]{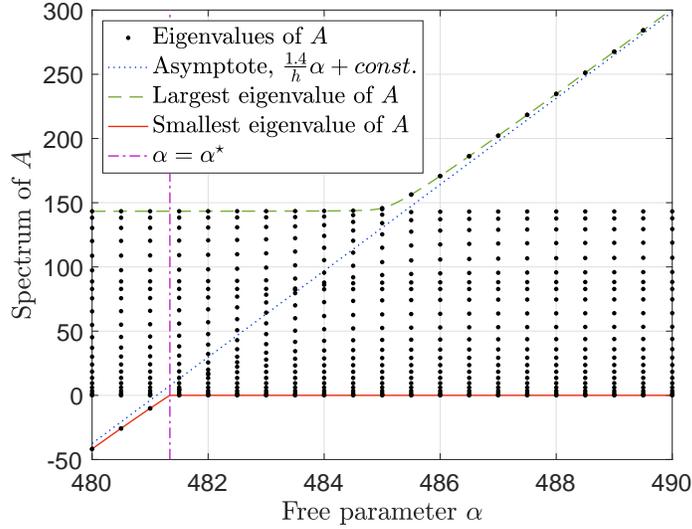}
\caption{The spectrum of $A$, as a function of $\free$ (for $n=24$).}
\label{LargestSmallestEigOfA20}
\end{figure}   
As expected, we need $\free\geq\limit$ from \eqref{freelimit} to obtain $A\geq0$.
Note that in order to keep the largest eigenvalue of $A$ small, one should not exceed $\free\approx484.9$, which indicates that  the original value $\free=490$ is not the optimum choice.  
 It is crucial to consider how $\free$ affects the properties of $D_2$ and, even more importantly, $\DDN$.
This will be investigated in detail  in Section~\ref{InfluenceOfFree}.

\subsection{The borrowing capacity}

\newcommand{\indexc}{\text{\tiny C}}
\newcommand{\qhatc}{\xi_\indexc}
\newcommand{\qhatl}{\xi_\indexl}
\newcommand{\qhatr}{\xi_\indexr}
\newcommand{\qhatlr}{\xi_\indexlr}

\newcommand{\indexrl}{\text{\tiny R,L}}
\newcommand{\dserl}{\mathbf{d}_\indexrl}

In the semi-discretization of the wave equation with Dirichlet boundary conditions \eqref{waveD_semi}, we need the borrowing capacity $\app$. Since $\app$ depends on $A$, its known numerical value is only valid for the standard choice $\free=490$ from \cite{Mattsson2004503}. When $\free$ is changed, $\app$ needs to be adjusted.
\begin{figure}[H]
    \centering
    \includegraphics[width=0.6\textwidth]{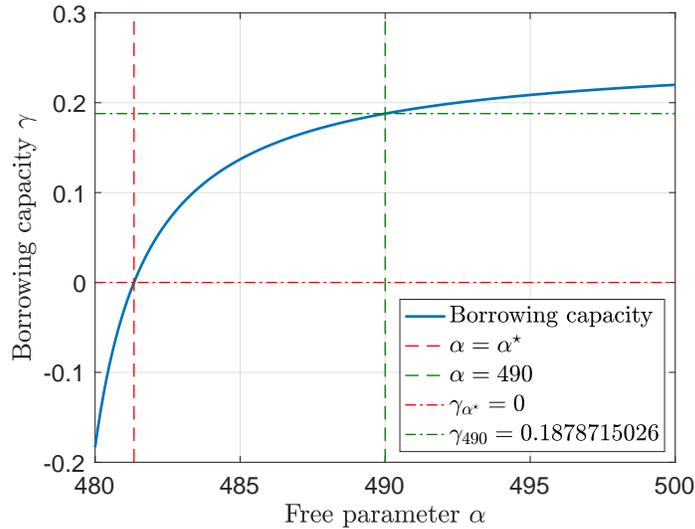}
    \caption{The borrowing capacity 
    $\app$ as a function of the free parameter $\free$ in $A$.
    }
    \label{fig:AnewBorrow2}
\end{figure}

According to Theorem 3.5 in \cite{InversesEriksson2020preprint}, $\app$ can be computed using
\begin{align}\label{ComputeApp}
\app=\frac{1}{h(\qhatlr+|\qhatc|)},
&&\qhatlr=1+\dselr^\trans G_2\dselr,&& \qhatc=1+\dserl^\trans G_2\dselr.
\end{align}
The resulting $\app$ is shown in Figure~\ref{fig:AnewBorrow2}. The value varies slightly with $n$, but for $n\geq21$ it has converged numerically  (in double precision). The number $\gamma_{490}=0.1878715026$ shown in Figure~\ref{fig:AnewBorrow2} is the known value computed for the standard choice $\free=490$  in \cite{MATTSSON20088753}.
Note that if the free parameter is chosen as $\free=\limit$ from \eqref{freelimit}, then there is no extra "positivity" in $A$ available, and it is not possible to use the "borrowing technique".


\subsection{Compatibility of $D_1$ and $D_2$}
\label{SecComp}

\newcommand{\freeD}{\beta}

As mentioned, when solving PDEs with both the first and second derivatives, it is important that the SBP operators $D_1$ and $D_2$ are compatible.  
The SBP operators $D_1$ and $D_2$ are compatible if 
$R = A - D_1^\trans H D_1$ in Definition~\ref{DefComp}
is symmetric positive semi-definite.
The standard sixth order accurate $D_2$ with $\free=490$ is compatible with $D_1$ constructed in  \cite{ref:KREI74,ref:STRA94}.
Now we are interested if this holds also for other choices of $\free$.

Note that the sixth order accurate $D_1$  also has one free parameter, $\freeD$ (denoted $x_1$ in \cite{ref:STRA94}). Whether or not $D_1$ and $D_2$ are compatible depend not only on $\free$ but also on $\freeD$, as we see in Figure~\ref{fig:CompatibilityRegion2}.
\begin{figure}[hbt]\centering
\subfigure{\includegraphics[width=.5\textwidth]{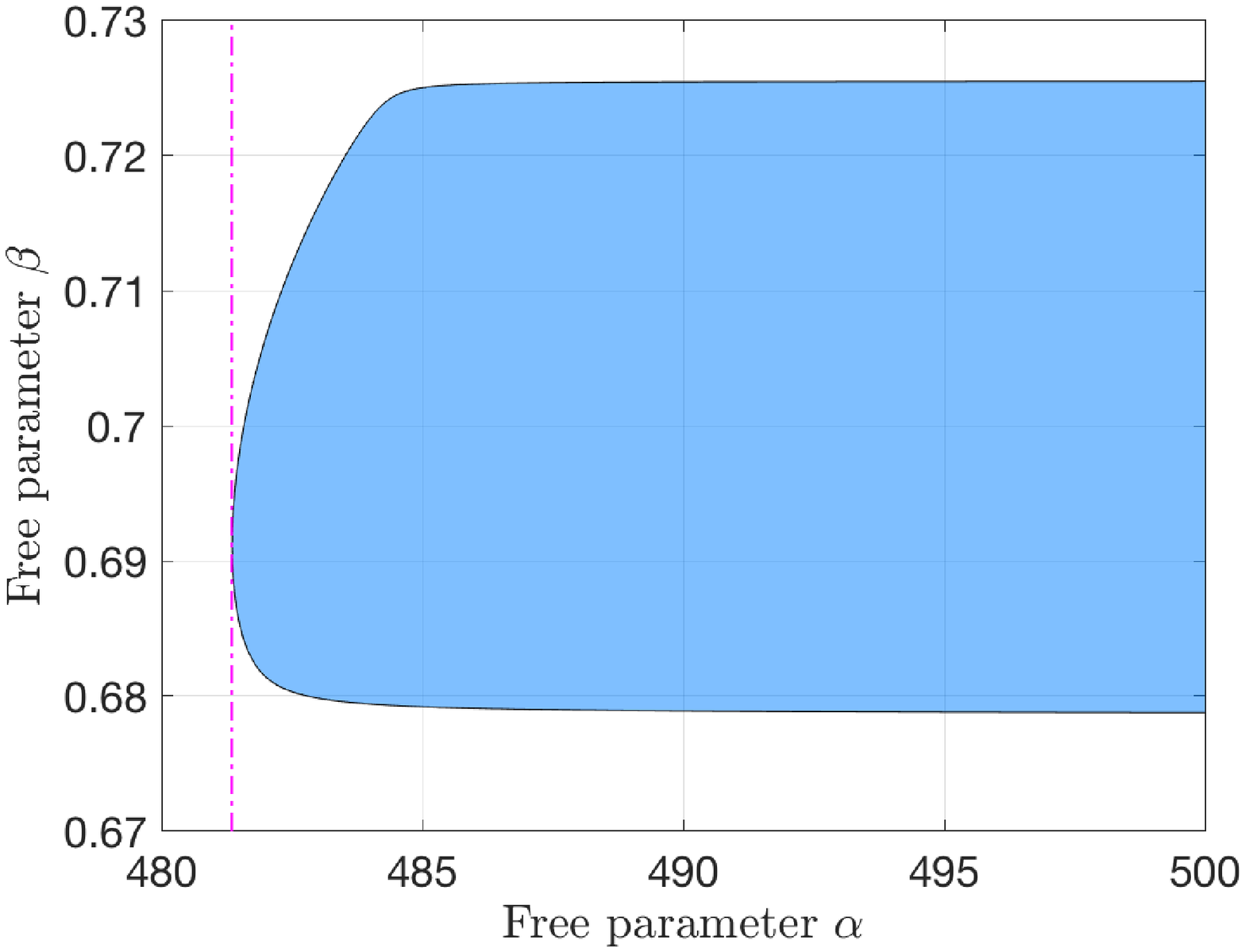}}%
\subfigure{\includegraphics[width=.5\textwidth]{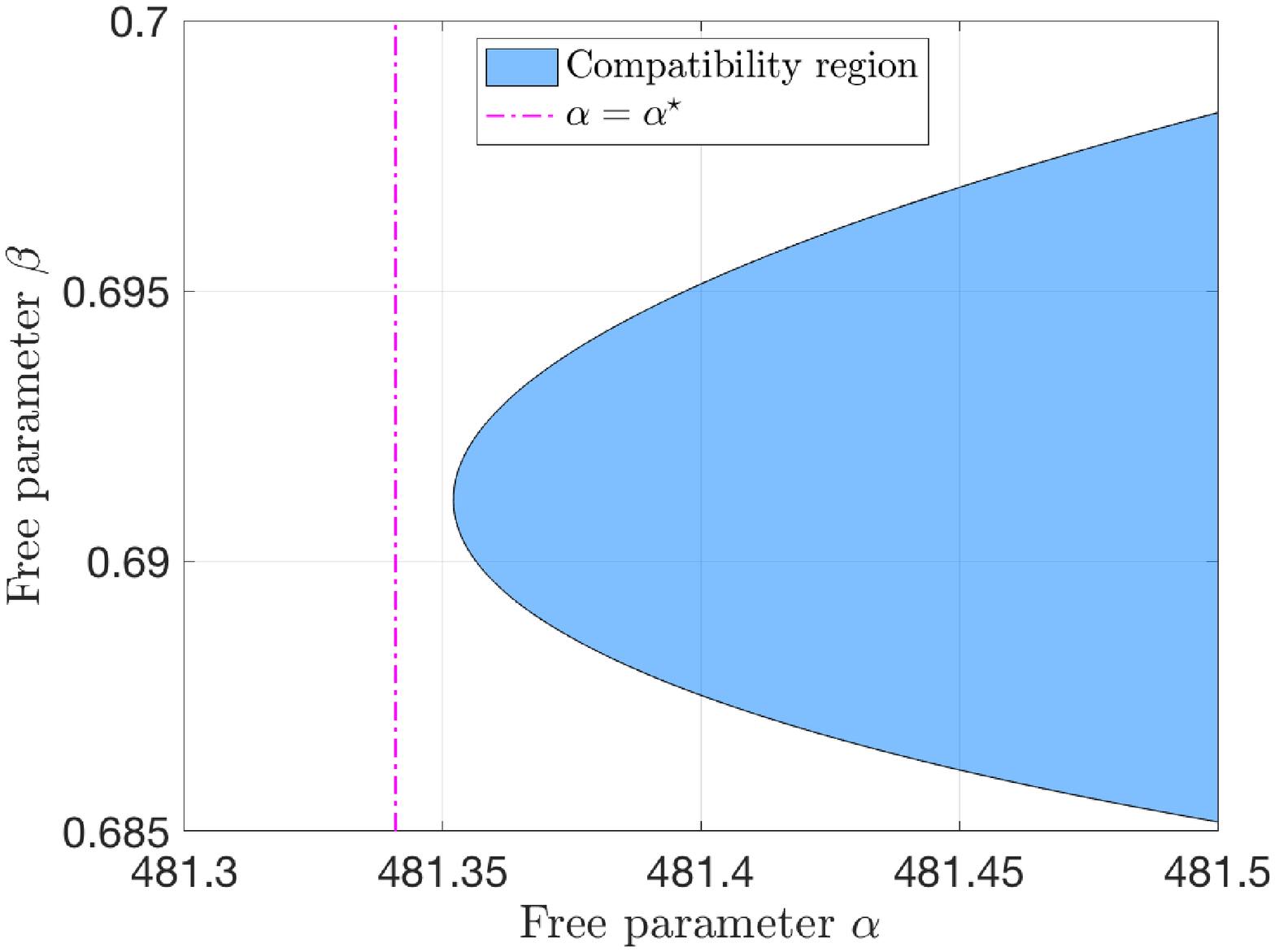}}
\caption{Compatibility region of $D_1$ and $D_2$. If zooming in very much, one finds that it is necessary with $\free>481.35207212433>\limit$ for compatibility (with $\freeD=0.69113483$).}
\label{fig:CompatibilityRegion2}
\end{figure} 
Naturally, since the eigenvalues of $A$ become increasingly positive as $\free$ increases, so does the range of compatible values of $\freeD$. Since $R$ is less positive than $A$, the minimum accepted value $\free=\limit$ is not enough to make $D_1$ and $D_2$  compatible. The limit for compatibility is thus slightly larger than $\limit$, to be precise $\free\geq481.35207212433$. At this limit, we need $\freeD= 0.69113483$ for $D_1$ and $D_2$ to be compatible. In \cite{Mattsson2008b}, it is reported that when $\free=490$, the compatibility region is $\freeD\in[0.6789094547,0.7254477238]$.

In the literature, 
three different values of $\freeD$ have been used, and the corresponding operators have optimal properties of different perspectives. 
\begin{enumerate}
    \item In \cite{ref:STRA94}, 
    $\freeD=89387/129600\approx 0.6897$ is used to obtain an operator $D_1$ with minimum bandwidth. More precisely, there are five nonzeros in the first row, instead of six with other choices of $\freeD$. The corresponding operator $D_1$ is compatible with $D_2$ if $\free\geq 481.3588804669321$.
    \item In \cite{Mattsson2004503},  
    $\freeD=342523/518400\approx 0.6607$ is used to optimize accuracy. On the first six grid points the truncation error of $D_1$ is designed to be third order. With this particular choice of $\freeD$, on the sixth grid point the truncation error is fourth order. The truncation errors on the first five grid points remain third order. However, the corresponding operator $D_1$ is not compatible with $D_2$ for any $\free$.    
    \item In \cite{Mattsson2008b},  
    $\freeD=331/472\approx 0.7013$ is derived to optimize $L^2$ accuracy and spectrum. 
    The corresponding operator $D_1$ is compatible with $D_2$ if $\free\geq 481.6401641339156$.
\end{enumerate}

\begin{remark}
With the above three choices of $\freeD$, the operator $Q+\el\el^\trans$ has full rank, see Theorem 4.4 and 4.5 in  
\cite{RUGGIU_eigen}. As a consequence, the operator $A^{wide}=D_1^\trans H D_1$ has only one zero eigenvalue, where $A^{wide}$ is part of the wide-stencil second derivative SBP operator
$
    D_2^{wide}=D_1^2=H^{-1}QD_1=H^{-1}(-D_1^\trans H+\er\er^\trans-\el\el^\trans)D_1
$.
\end{remark}

\section{The influence of $\free$ on numerical properties}
\label{InfluenceOfFree}

We have seen that the sixth order accurate narrow-stencil operator has a free parameter, $\free$. 
Moreover, from what we have seen in Figure~\ref{LargestSmallestEigOfA20} the original value $\free=490$ derived in \cite{Mattsson2004503} might not be the optimum choice, at least not when it comes to the spectral radius of $A$. However, it is not only the properties of $A$ that are important, but also those of $D_2$ and the resulting numerical solutions. Below we  investigate numerical properties, such as accuracy and stiffness, for an improved operator $D_2$.

\subsection{Truncation error of $D_2$}

As before, let $\restrict=[\smooth(x_0)\ \smooth(x_1)\ \hdots\ \smooth(x_{n})]^\trans$ be the restriction of a smooth function $\smooth(x)$ to the grid. Multiplying $D_2$ by $\restrict$, we expect the resulting vector $D_2\restrict$ to approximate $\smooth_{xx}=-\force$ well, that is, we expect the residual vector
\begin{align*}
\begin{split}
\trunc=-D_2\restrict-\fh
\end{split}
\end{align*}
to be small. The sixth order accurate narrow-stencil matrix $D_2$ is designed to have sixth order of accuracy in the interior and third order of accuracy at the boundary. These requirements are fulfilled regardless of the choice of $\free$. This means that $\trunc=0$ for $\smooth=x^p$, for $p=0,1,2,3$ and $4$, by construction.

The exact expressions of the elements in $\trunc$ are obtained using Taylor expansions and involve higher derivatives of $\smooth(x_j)$ with respect to $x$. For the sixth order accurate operator, the dominating error term is proportional to $h^3\smooth^{(5)}$ and can be estimated by letting $\smooth=x^5$ such that $\smooth^{(5)}$ is a constant and the following Taylor expansion terms zero. In this case we have $\trunc=-D_2\xfat^5 + 20\xfat^3$.
Explicitly, the $(n+1)\times1$-vector $\trunc$ has value zero in the interior, at the boundaries it is 
\begin{align*}
\trunc=h^3\left[\begin{array}{c}
 (12536750-28800\free )/13649\\
- (13794742 - 28800\free)/12013\\
 ( 13929546-28800\free )/2711\\
- (13980554 - 28800\free)/5359\\
 (13950358-28800\free )/7877\\
- (13841550 - 28800\free)/43801\\
 0\\
 \vdots\\
 0\\
( 13841550-28800\free )/43801\\
- (13950358 - 28800\free)/7877\\
 ( 13980554-28800\free )/5359\\
- (13929546 - 28800\free)/2711\\
 (13794742-28800\free )/12013\\
- (12536750 - 28800\free)/13649\\
\end{array}
\right].
\end{align*}
We want to choose the free parameter $\free$ such that the boundary error vector $\trunc$ is minimized. The "size" of $\trunc$ can be measured in many different norms, for example the usual discrete $L^2$-norm $\sqrt{\trunc^\trans\trunc}$, which is minimized for
$\free\approx482.5622776076688$. Alternatively, the error vector in the SBP norm $\|\trunc\|_H=\sqrt{\trunc^\trans H\trunc}$ is minimized when $\free\approx 483.3965798037094$. Other equivalent norms may also be used, and we can expect that $\free$ is in the range 
$\frac{12536750}{28800}\leq\free\leq\frac{13980554}{28800}$,
because this is the range wherein the components of $\trunc$ pass zero. Since the lower limit is smaller than $\free^*$, we should choose 
\begin{align}\label{AccuracyD2}
\limit\leq\free\leq\frac{13980554}{28800}\approx485.4359027777778.
\end{align}






\newcommand{\pointu}{\mathbf{u}} 
\newcommand{\fel}{\boldsymbol{\epsilon}}

We will later present numerical simulations of the wave equation and the heat equation.
However, before doing so, we will look into the influence of $\free$ on the operators $\DDN$, when it comes to accuracy of the steady problem as well on stiffness (spectral radius of $\DDN$).

\subsection{Poisson’s equation with Neumann boundary conditions}
\label{SubSecNeumann}

We first consider 
Poisson's equation with Neumann boundary conditions, discretized in \eqref{Avb}. The coefficient matrix $A$ in \eqref{Avb} is singular, so we cannot solve this system uniquely.
Instead, we solve 
the system with the help of the pseudoinverse presented in \eqref{MPinvofA}.

\subsubsection{Accuracy and spectral radius of $\DN$}

To minimize the dominating errors, we again consider $\contsol=x^5$ as the manufactured exact solution and let $\pointu=\xfat^5$ be the exact solution evaluated at the grid points.
Since we cannot solve the system uniquely in this case, we only obtain the solution up to a constant. Therefore, we define the error as 
$\fel=\pointu-\frac{\ones^\trans\pointu}{n+1}\ones-\discsol$. 

In 
Figure~\ref{NeumannAccuracyAndSpectrum}(a),
we show the error, measured in different norms, as a function of $\free$.
Zooming in, we see that error $\|\fel\|_H$ is minimized when $\free\approx484.30$ 
(for the usual discrete $L^2$-norm, the error is minimized at $\free\approx484.11$, and in the maximum norm the error is minimized at $\free\approx483.30$).
\begin{figure}[ht]\centering
\subfigure[The errors 
$\|\fel\|_H$, $\|\fel\|_{\infty}$ and $\|\fel\|_{L^2}$
]{\includegraphics[width=.495\textwidth]{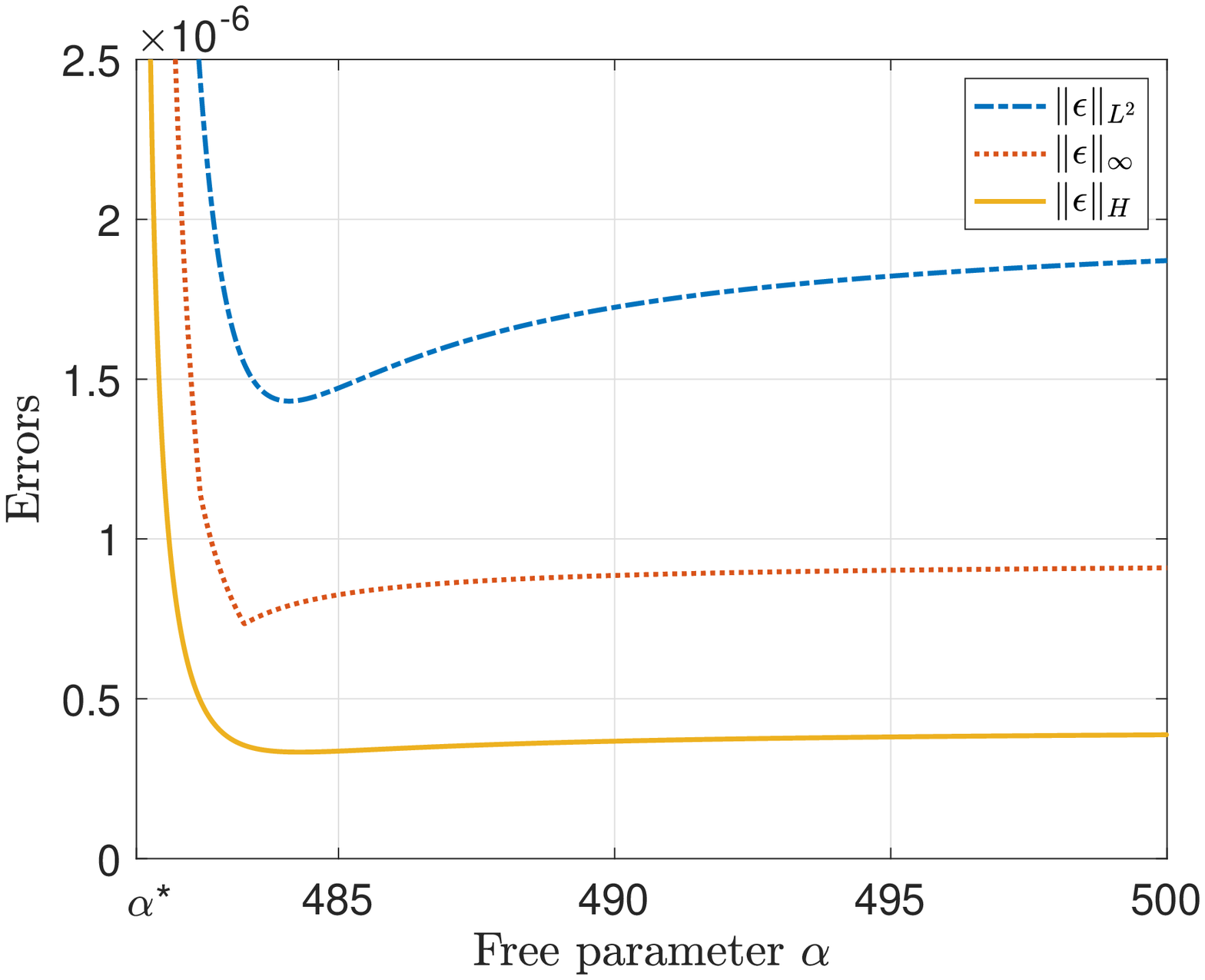}
}%
\subfigure[The spectrum 
of $-\DN$ and spectral radius $\rho(\DN)$
]{\includegraphics[width=.495\textwidth]
{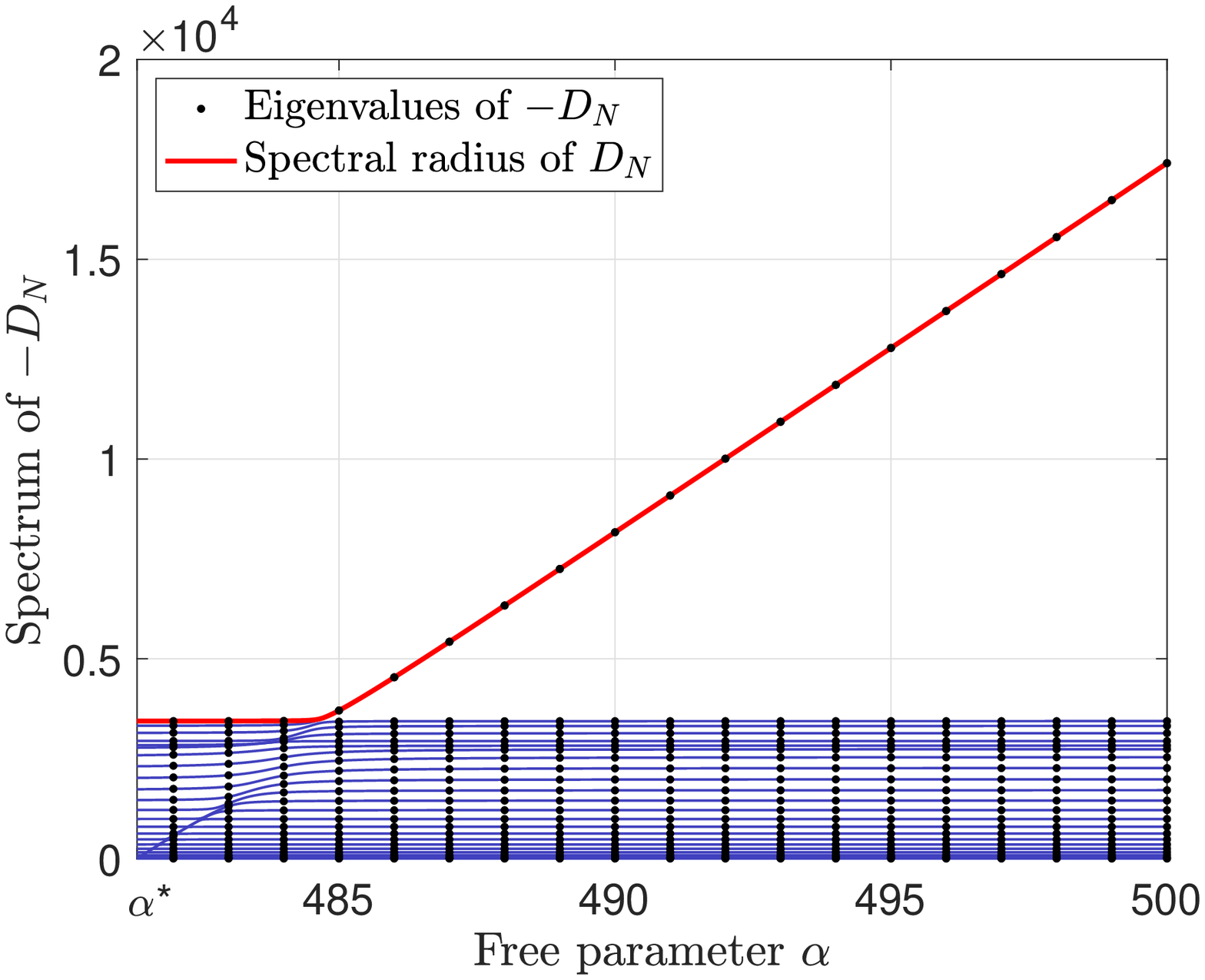}
}
\caption{Errors and numerical spectrum with Neumann boundary conditions.}
\label{NeumannAccuracyAndSpectrum}
\end{figure} 
The figure is produced using $n=24$, but the values of $\free$ that minimize the error are representative for 
all $n$. 
Note that the choice of $\free$ cannot be used to improve the order of accuracy, it only affects the error constant. 


If considering time-dependent problems, it is important that the discretization is non-stiff (at least if an explicit time-stepper is used). An indication of stiffness is the spectral radius $\rho(\DN)$, with $\DN$ given in \eqref{D2tildeNeumann}. In Figure~\ref{NeumannAccuracyAndSpectrum}(b), the eigenvalues of $\DN$ (with reversed sign) are shown as functions of $\free$. The spectral radius is the largest absolute value of these eigenvalues, marked with a red line. Looking in Figure~\ref{NeumannAccuracyAndSpectrum}(b), we see that for choices of $\free$ larger than $\free\approx484.6$, the spectral radius starts growing fast (linearly). Luckily, this is not in conflict with the accuracy demand from above and, if we are interested in minimizing the error in the SBP-norm, $\free\approx484.3$ remains a good option for the Neumann problem.

The choice of $\free=484.3$ gives a 10\% smaller error constant than the standard choice $\free=490$, and reduces the spectral radius to 42-43\% of the original value.

\subsection{Poisson's equation with Dirichlet boundary conditions}
\label{SubSecDirichlet}

Now consider Poisson's equation with Dirichlet boundary conditions, that is \eqref{wave} with \eqref{waveD} with the term $\contsol_{tt}$ removed. This problem is discretized as 
$0=\DD\discsol+\fD$, 
where $\DD$ is given in \eqref{D2tildeDirichlet}.
The properties of $\DD$ do not only depend on $\free$, but also on the choice of 
penalty strength $\factor$ in \eqref{factor}.
We will now look into how $\free$ and $\factor$ influence the accuracy 
as well as the spectral radius.

\subsubsection{Accuracy}
\label{sec_AccuracyPoissonDirichlet}

We start by investigating the accuracy. Solving the above linear system of equation and comparing with the exact solution $\pointu$, we define the solution error as $\fel=\pointu-\discsol$.
As before, we want to focus on the errors generated at the boundary since they are the ones most affected by $\free$, and let $\pointu=\xfat^5$ again.

The value of $\free$ that minimizes the error depends on the choice of penalty strength, see Figure~\ref{D2tildeDirichletAcc}(a), where the errors produced when $n=24$ are shown.
\begin{figure}[ht]\centering
\subfigure[The error 
as a function of $\free$, for various choices of $\factor$ ]{\includegraphics[width=.49\textwidth]
{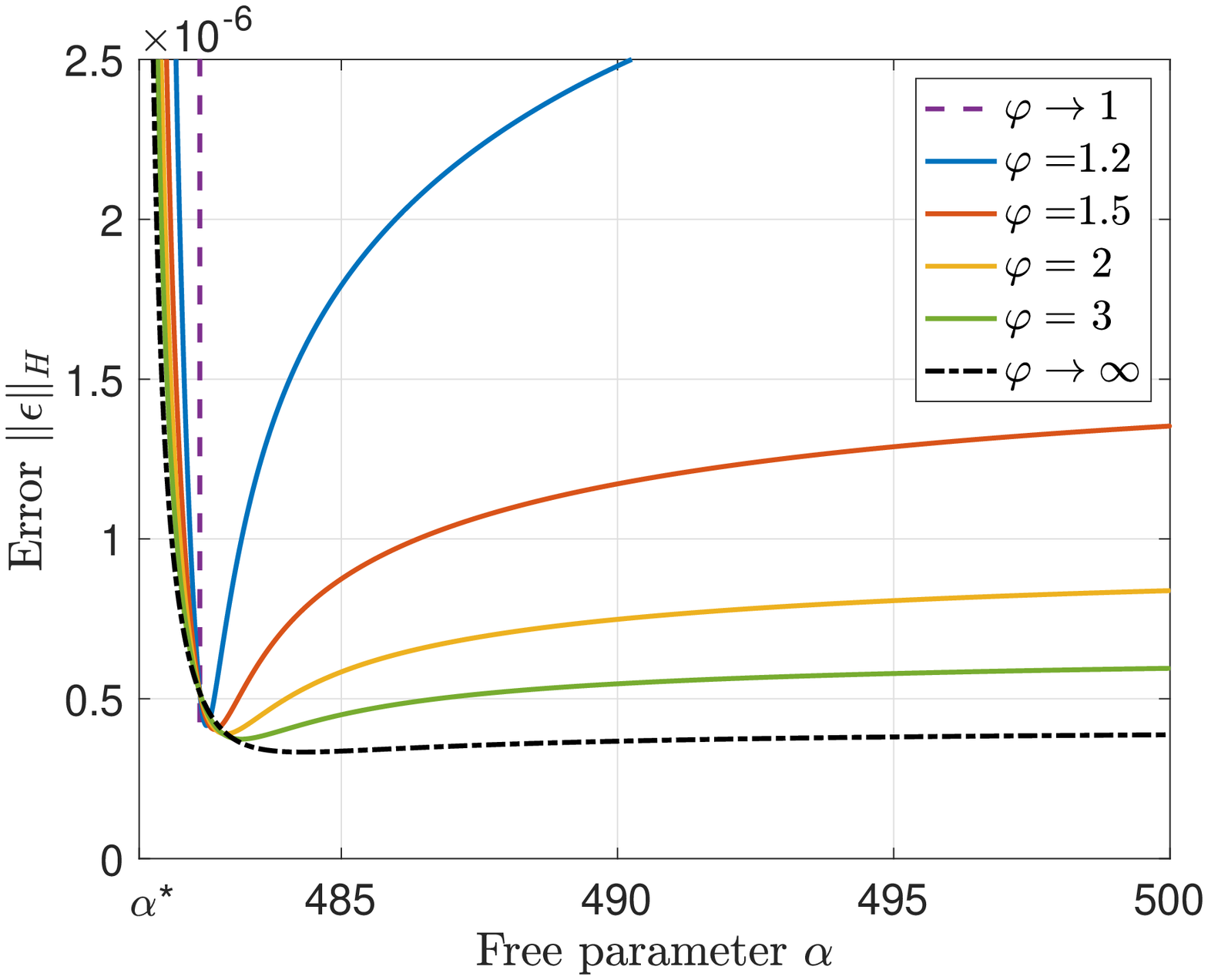}
}%
\subfigure[The error (scaled by $10^6$) as a function of $\free$ and $\factor$]{\includegraphics[width=.49\textwidth]
{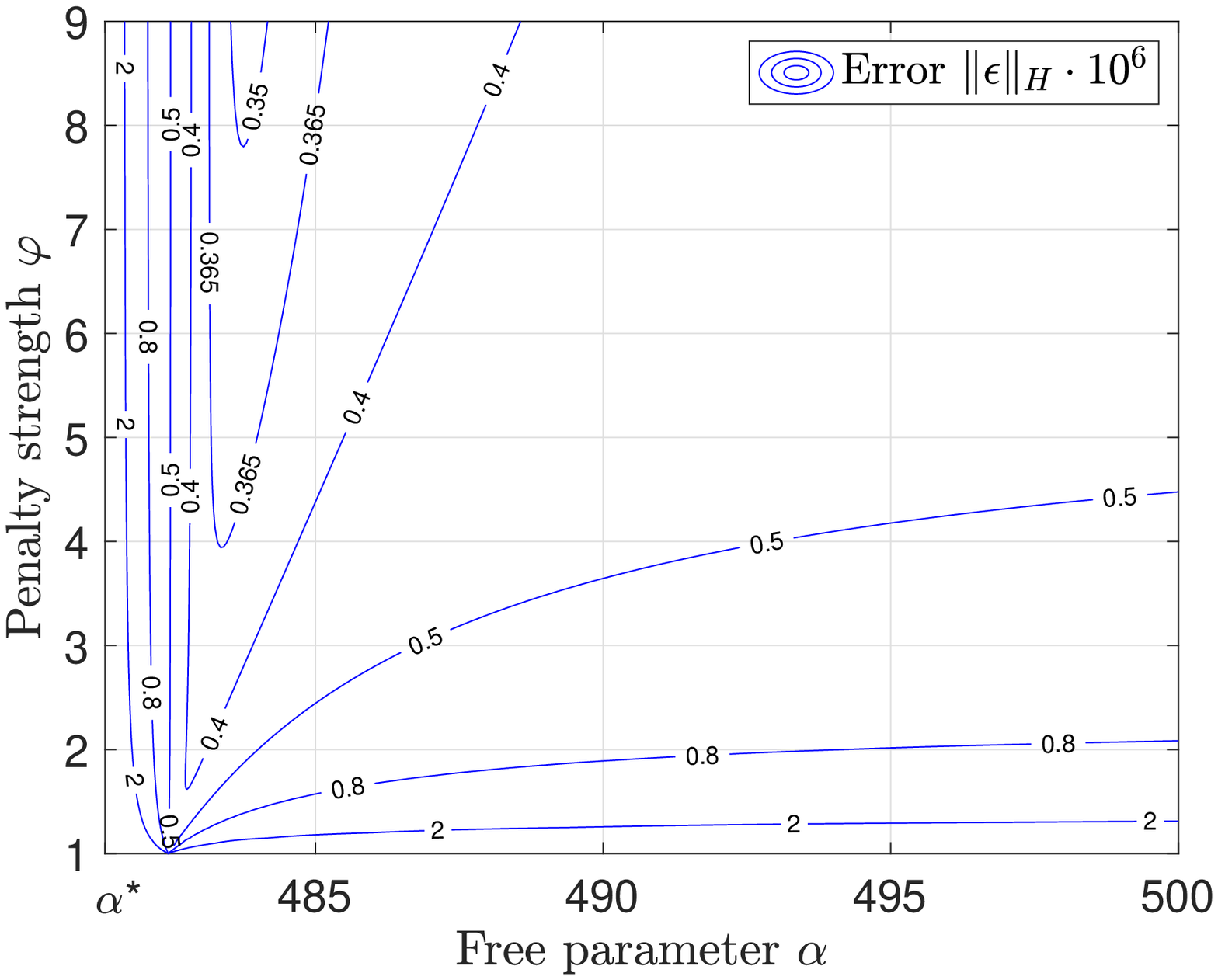}
}\caption{The error $\|\fel\|_H$ when imposing Dirichlet boundary conditions}
\label{D2tildeDirichletAcc}
\end{figure} 
If the penalty is chosen right at the stability limit (i.e. $\factor\to1$), then $\|\fel\|_H$ is minimized for $\free=482.44$. If the penalty is tuned extremely strong (i.e. $\factor\to\infty$), then $\|\fel\|_H$ is minimized for $\free=484.30$.
These values seem to be independent of the number of grid points $n$, such that the most accurate solutions of the problem with Dirichlet boundary conditions are obtained for
\begin{align}\label{FreeAccDir}
482.44\leq\free\leq484.30,
\end{align}
for any given choice of $\factor>1$.
As mentioned, the SBP-norm is probably the most natural choice and the norm we will pay most attention to in the rest of the paper -- however, if instead minimizing with respect to some other norm, the results change slightly (in the usual discrete $L^2$-norm, the error is minimizied for $482.44\leq\free\leq484.11$, and in the maximum norm for $482.44\leq\free\leq483.30$).
Interestingly, the values of $\free$ minimizing the error as $\factor\to\infty$, are the same values that 
minimize the errors when using the Neumann boundary conditions. 

By increasing the strength of the penalty, the error usually decreases, see Figure~\ref{D2tildeDirichletAcc}(b), 
where the contour lines of the error $\|\fel\|_H$ are shown as functions of $\free$ and $\factor$. 
The smallest errors are found for $\free=484.30$ with $\factor\to\infty$. 
However, as we will discuss next, if the penalty strength is too high, the scheme becomes stiff.

\subsubsection{Spectral radius of $\DD$}

In Figure~\ref{fig:SpectrumDirichlet}, we see the spectrum of $\DD$. Looking in the figure it is not distinguishable, but the {\it two} largest eigenvalues (one for each boundary) are following the red line (the spectral radius). Moreover, we see that not only they increase with increasing $\free$, also the third and fourth largest eigenvalue do (the two top blue lines -- also indistinguishable from each other -- separating from the rest). This is particularly apparent for larger values of $\factor$, such as in Figure~\ref{fig:SpectrumDirichlet}(b). In Figure~\ref{fig:SpectrumDirichlet}(a), the lines following the eigenvalues have a less "wavy" behaviour, because two of the eigenvalues are fixed at zero (since $\factor=1$ makes $\DD$ singular).
\begin{figure}[ht]
 \centering
\subfigure[$\factor=1$]{
 \includegraphics[width=.49\textwidth]{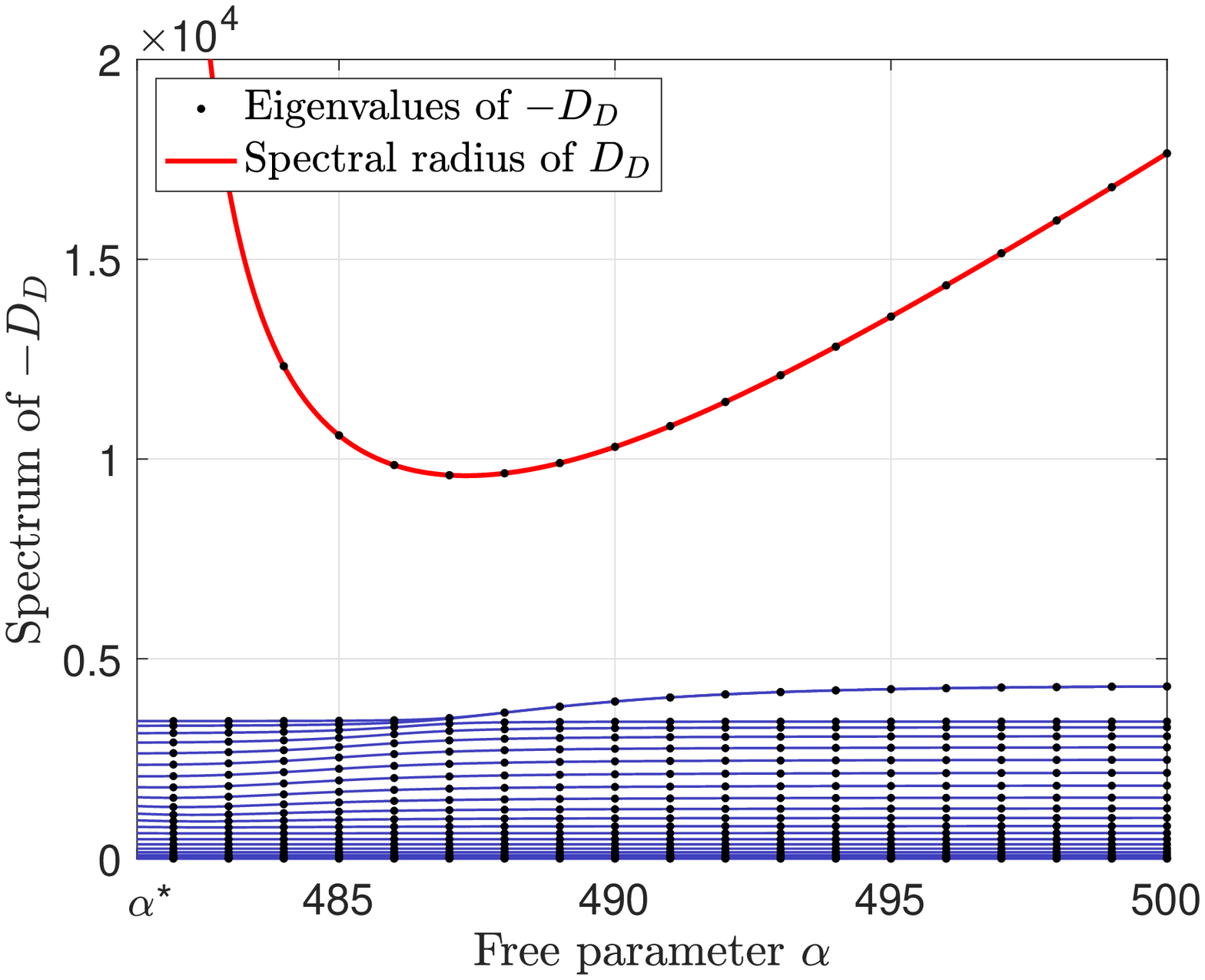}}%
 \subfigure[$\factor=2$]{
 \includegraphics[width=.49\textwidth]{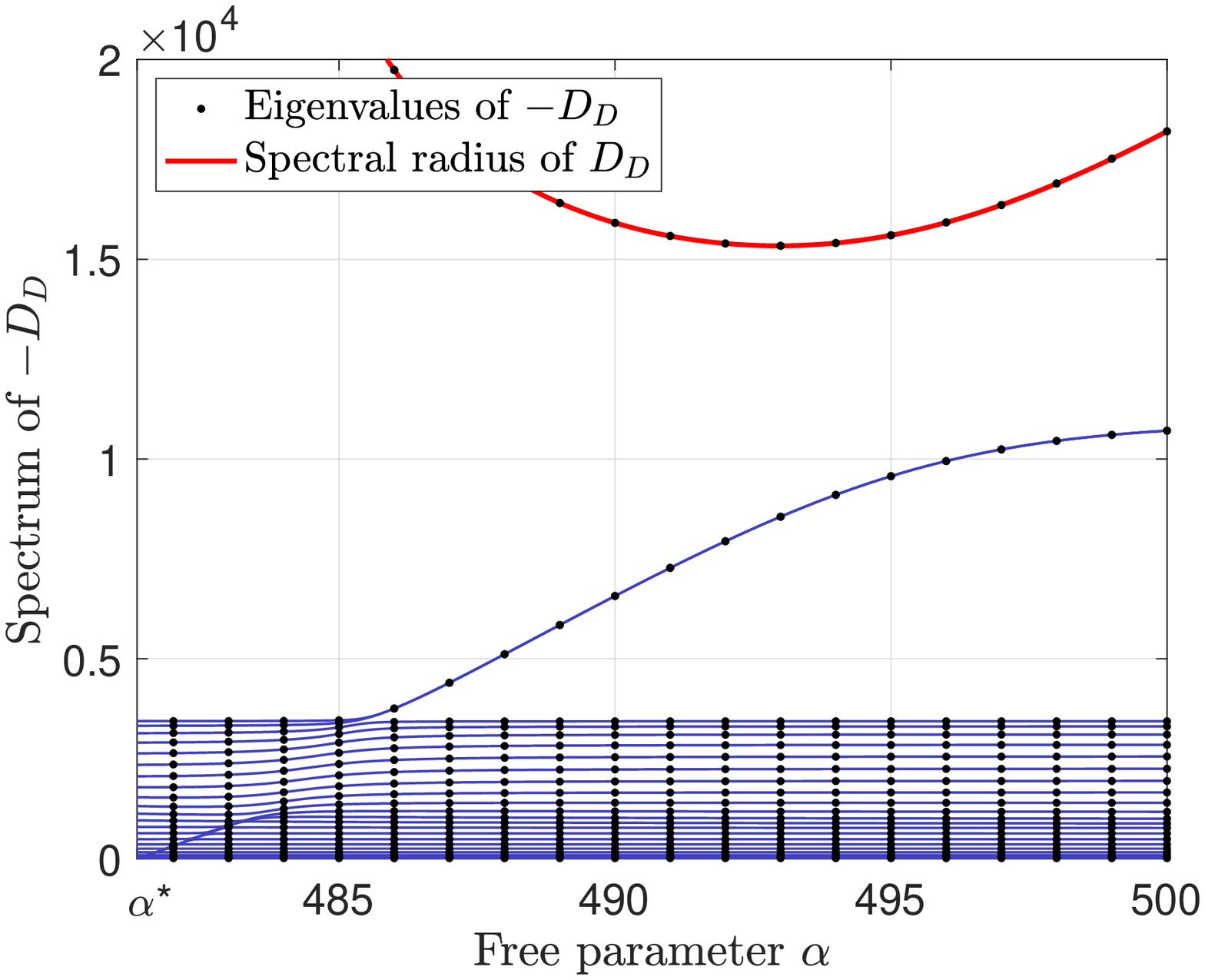}}
 \caption{Eigenvalues of the discretization operator with Dirichlet boundary conditions
 }
 \label{fig:SpectrumDirichlet}
\end{figure}

As mentioned, 
the spectral radius of an operator is an indication of stiffness. The spectral radius $\rho(\DD)$ is shown in Figure~\ref{Fig7_SpectralRadiusD2tildeDirichlet}(a), as a function of $\free$, for different choices of $\factor$.
When $\factor=1$, the spectral radius is minimized for $\free\approx487.30$, and as $\factor$ increases so do the spectral radius as well as the value of $\free$ that minimizes the spectral radius.
We note that for any given choice of $\factor\geq1$, $\free\geq487.30$ is necessary to minimize the spectral radius. 
\begin{figure}[ht]\centering
\subfigure[$\rho(\DD)$ as a function of $\free$ for various $\factor$ (for $n=24$)]{\includegraphics[width=.495\textwidth]
{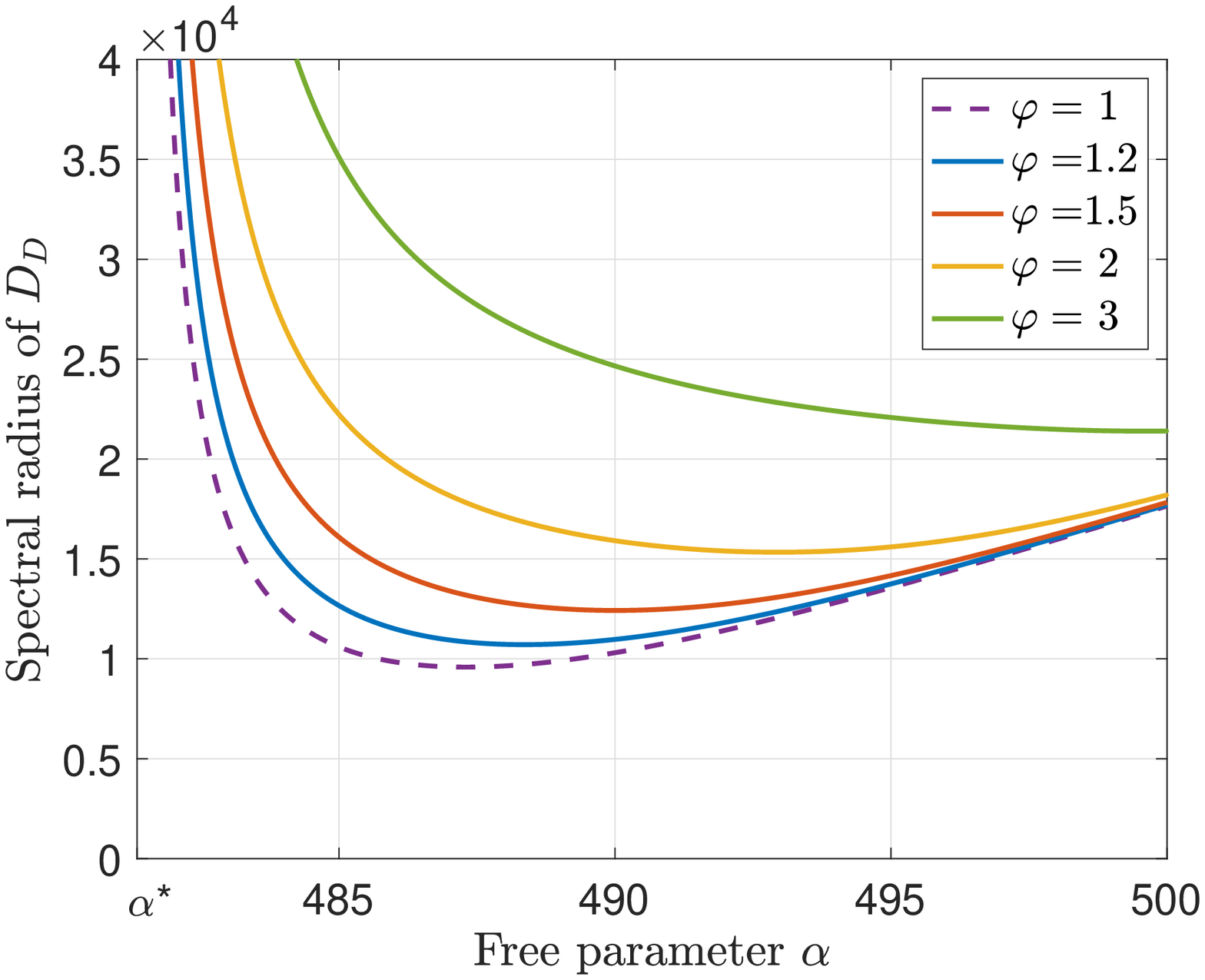}
}\subfigure[$\rho(\DD)$ as a function of $\free$ and $\factor$ (for $n=24$)]{\includegraphics[width=.495\textwidth]
{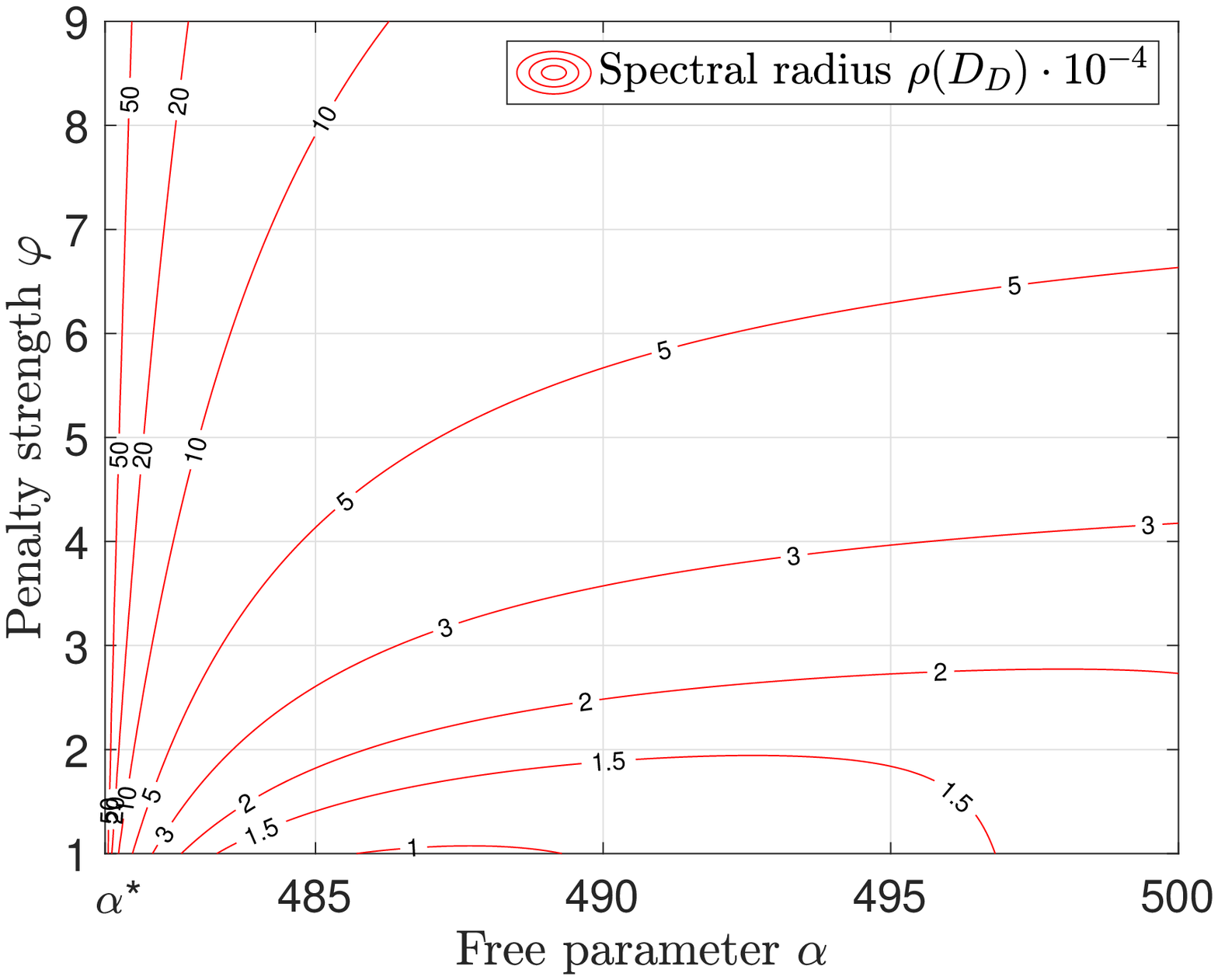}}
\caption{The spectral radius $\rho(\DD)$ when imposing Dirichlet boundary conditions.}
\label{Fig7_SpectralRadiusD2tildeDirichlet}
\end{figure}
In Figure~\ref{Fig7_SpectralRadiusD2tildeDirichlet}(b), we see more clearly that $\DD$ has the smallest spectral radius $\rho(\DD)$ for $\free\approx487.30$ with $\factor=1$, and increases with increasing penalty strength.  A strong penalty with $\free\to\limit$ is the worst combination.

\subsubsection{A weighted balance between accuracy and spectral radius of $\DD$}

So far, we have seen that $\DD$ gives best accuracy for $482.44\leq\free\leq484.30$, preferably $\free\approx484.30$ with $\factor\to\infty$, see Figure~\ref{D2tildeDirichletAcc}(b). However, $\DD$ has the smallest spectral radius $\rho(\DD)$ for $\free\approx487.30$ with $\factor=1$, see Figure~\ref{Fig7_SpectralRadiusD2tildeDirichlet}(b). These specifications are unfortunately mutually exclusive, and we would like to find a compromise. 

How $\free$ and $\factor$ should be combined to obtain an "optimal" operator $\DD$, depends on how low errors versus low stiffness are valued. Here, we will contrast the relative error $\|\fel\|_H/\min_{\free,\factor}(\|\fel\|_H)$ and the relative spectral radius $\rho/\min_{\free,\factor}(\rho)$.
Consider Figure~\ref{SlidingOptimum}(a), where the contour lines of the relative error are shown in blue and the contour lines of the relative spectral radius are shown in red (note that compared to Figure~\ref{D2tildeDirichletAcc}(b) and Figure~\ref{Fig7_SpectralRadiusD2tildeDirichlet}(b), the values have been re-scaled).
\begin{figure}[ht]\centering
\subfigure[Optimal combinations of $\free$ and $\factor$]{\includegraphics[width=.5\textwidth]{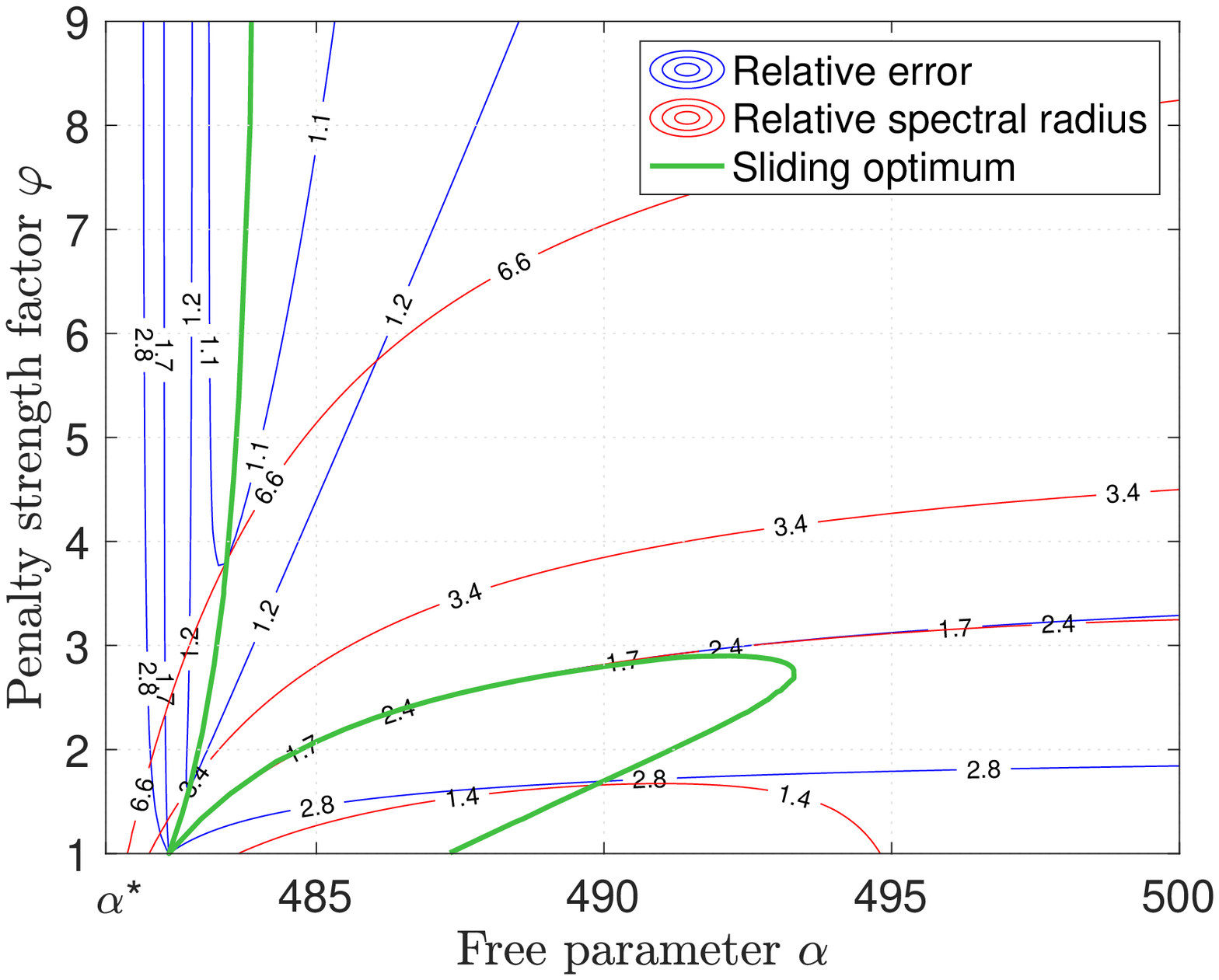}}%
\subfigure[Resulting error and spectral radius for given pairs $(\free,\factor)$]{\includegraphics[width=.5\textwidth]{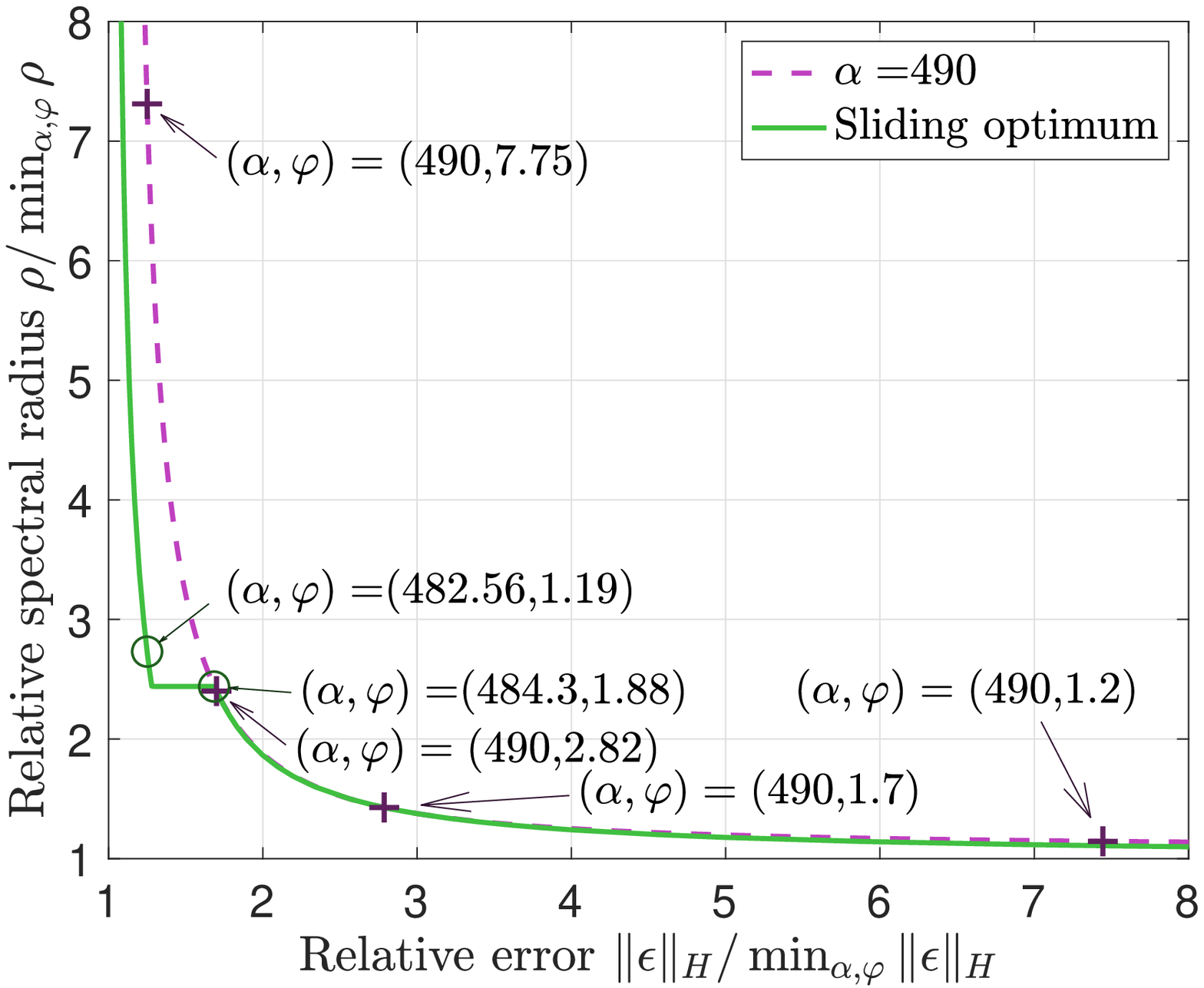}}
\caption{The relative error $\|\fel\|_H/\min_{\free,\factor}\|\fel\|_H$ and the relative spectral radius $\rho/\min_{\free,\factor}\rho$ when imposing Dirichlet boundary conditions.
In (a): The relative error and spectral radius are shown as blue and red contour lines, respectively. The green line shows where the minimum of $\|\fel\|_H\rho(\DD)^\omega$ is found, where the weight $\omega$ varies.
In (b): The relative error and relative spectral radius obtained given particular combinations $(\free,\factor)$, comparing the optimal combinations from (a) with the fixed $\free=490$ case. 
}
\label{SlidingOptimum}
\end{figure} 
Now suppose that accuracy is most important to us, but we are willing to accept an error that is 10\% higher than the minimum. 
Then we move along the blue contour line marked with 1.1 until we find a minimum of the relative spectral radius, which in this case is approximately 6.61. Thus the optimal combination of $\free$ and $\factor$ is found in the intersection between the 1.1 blue contour line and a 6.61 red contour line, at $(\free,\factor)\approx(483.44,3.82)$. If we instead are prepared to accept a 20\% increase in the error constant to get a lower spectral radius, the lowest possible (relative) spectral radius is 3.39, obtained for $(\free,\factor)\approx(482.80,1.64)$. In this way, we find a sliding optimum, marked as a green line in Figure~\ref{SlidingOptimum}(a) and with some examples given in the first four columns in
Table~\ref{tab:optimumcombinationsALT}.
\begin{table}[H]
 \centering
 \begin{tabular}{|l|rlr|rlr|}
 \hline
 $\|\fel\|_H/\min_{\free,\factor}\|\fel\|_H$ & $\rho/\min_{\free,\factor}\rho$&$\free$&$\factor$&$\rho/\min_{\free,\factor}\rho$& $\free$&$\factor$\\\hline
 $\to1$&$\to\infty$&484.29&$\to\infty$&\multicolumn{3}{c|}{not possible}\\
1.05&12.97 & 483.85 & 8.01&\multicolumn{3}{c|}{not possible}\\
1.1 & 6.61 & 483.44 & 3.82&\multicolumn{3}{c|}{not possible}\\
1.15&4.47 & 483.10 & 2.38&21.03&490&21.32\\
1.2 & 3.39& 482.80 & 1.64&10.66&490&11.06\\
1.25 & 2.73 & 482.56 & 1.19&7.31&490& 7.75 \\
1.7 & 2.40 & 490.01 & 2.82& 2.40 & 490 & 2.82\\
2 & 1.86 & 492.48 & 2.36&1.88&490&2.25\\
2.79& 1.43& 490.00& 1.70& 1.43& 490& 1.70\\
7.5 & 1.11 & 487.99 & 1.18&1.14&490&1.20\\
$\to\infty$&$\to1$&487.30&$\to1$&1.08&490&$\to1$\\\hline
 \end{tabular}
 \caption{Examples of "optimal" combinations of $\free$ and $\factor$, given the prescribed relative error in the left column.
 As a comparison, in the right columns we show how the penalty strength must be chosen to achieve the same accuracy using the standard $\DD$ with $\free=490$. When $\free=490$, the smallest relative error possible is 1.11. 
 Here $n=24$ has been used.
}
 \label{tab:optimumcombinationsALT}
\end{table}


Another way of viewing it, is that we look for where the minimum of $\|\fel\|_H\rho(\DD)^\omega$ is found, where the weight $0<\omega<\infty$ varies.
Zooming in on the green line in Figure~\ref{SlidingOptimum}(a), we see that the optimal (with respect to accuracy and stiffness with varying weight) is found for
\begin{align}\label{WeightedOptimal}
 482.44\leq\free\leq493.31,
\end{align}
with a matching choice of penalty strength $\factor$.
Repeating this procedure for the usual discrete $L^2$-norm, the optimal is found for $482.44\leq\free\leq493.52$, and in the maximum norm for $482.44\leq\free\leq492.86$.

From Table~\ref{tab:optimumcombinationsALT}, it is clear that if the errors decrease, the spectral radius increase (and vice versa). This is also illustrated in Figure~\ref{SlidingOptimum}(b), where the dashed curve shows the resulting error and spectral radius when $\free=490$ and $\factor$ is varied. The same is then done following the path of the "sliding optimum" from Figure~\ref{SlidingOptimum}(a), yielding a 
curve that always has {\it at least} as small error and spectral radius as when $\free$ is fixed. 

In Figure~\ref{SlidingOptimum}(b), we note that in the intermediate region, the standard value $\free=490$ is actually an excellent choice, not leaving much room for improvement. 
Most difference is observed in the region with small errors to the left in the figure. 
For example, the choices $(\free,\factor)=(490,7.75)$ and $(\free,\factor)=(482.56,1.19)$ both give the relative error $1.25$, but the former gives an relative spectral radius $7.31$ and the latter only $2.73$.
This is in agreement with what can be observed in Table~\ref{tab:optimumcombinationsALT}.

In Figure~\ref{SlidingOptimum}, there appears to be some sort of singularity at the point $(\free,\factor)\approx(482.44,1)$. Here the relative stiffness is around 2.44, but the relative error varies rapidly between 1.28 and 1.68, even when $\free$ is just altered slightly.
The cause for this is that the error  is more or less independent of $\factor$ for $\free\approx482.44$ (this can be seen in Figure~\ref{D2tildeDirichletAcc}(b), noting that the blue line $0.5\cdot10^{-6}$ is almost vertical at $\free\approx482.44$). On the other hand, $\factor=1$ makes $\DD$ singular.
The area around $(\free,\factor)\approx(482.44,1)$ is thus quite intriguing: This is where most "accuracy per stiffness" can be gained, but since it is sensitive to parameter changes one should probably leave some margin 
-- for example by picking 
$(\free,\factor)=(482.56,1.19)$, as in Figure~\ref{SlidingOptimum}(b).

Note that the optimal values discussed above and shown in \eqref{WeightedOptimal}, in Figure~\ref{SlidingOptimum} and Table~\ref{tab:optimumcombinationsALT} are obtained by balancing the error measured in the $H$ norm and the spectral radius of the discretization matrix. If the error is measured in another norm the numbers and curves change accordingly.

\subsection{Mixed boundary conditions}

As mentioned above, in the intermediate region of Figure~\ref{SlidingOptimum}(b), we did not get much improvement for the Dirichlet boundary conditions by changing $\free$. Another way of viewing it, is that it does not get any worse either. This can be used to choose $\free$ to make $D_2$ more of a "multipurpose operator".

In Figure~\ref{SlidingOptimum}(a), we note that the contour lines of the relative error and the relative spectral radius are almost aligned when the relative error is around 1.7 and the relative spectral radius is around 2.4. 
For example, the combination $(\free,\factor)=(490,2.82)$ yields the relative error 1.70 and the relative spectral radius 2.40, while the combination $(\free,\factor)=(484.3,1.88)$
 yields the relative error 1.68 and the relative spectral radius 2.44.
This means that the choice $(\free,\factor)=(484.3,1.88)$ is equivalent to the choice $(\free,\factor)=(490,2.82)$, in the sense that both of these combinations give an operator $\DD$ with almost the same relative error constant 
and relative spectral radius.
In other words, the standard choice of $\DD$ with a rather strong penalty can be replaced by the  choice that is optimal for the Neumann operator $\DN$ -- without changing the error constant or the spectral radius of $\DD$. 
This could be useful if we want optimal accuracy when having Neumann boundary conditions and accept a relative error and spectral radius around 2 when having Dirichlet boundary conditions.

Finally, we also consider Poisson’s equation with one Dirichlet boundary condition at $x = 0$ and one Neumann boundary condition at $x = 1$. Repeating the process (presented above for the pure Dirichlet case) shows that the influence of the Dirichlet boundary condition is dominating -- the spectral radius is almost identical when  comparing  Figure~\ref{fig:OneOfEach}(a) with Figure~\ref{fig:SpectrumDirichlet}(b).
 \begin{figure}[hbt]
 \centering
 \subfigure[Spectrum for $\factor=2$ and $n=24$. ]{
 \includegraphics[width=.49\textwidth]{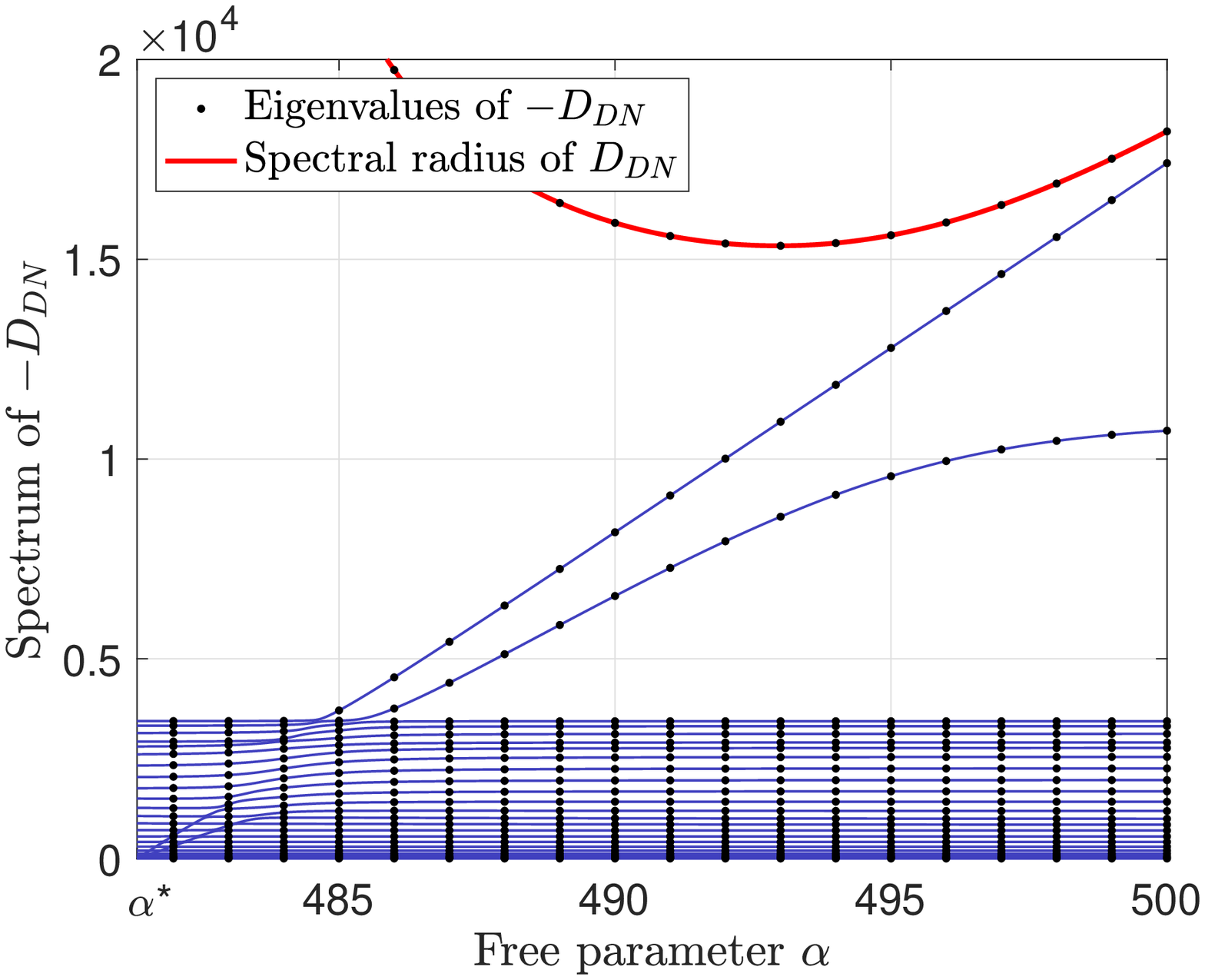}}
 \subfigure[Optimal combinations of $\free$ and $\factor$. 
 ]{
 \includegraphics[width=.49\textwidth]{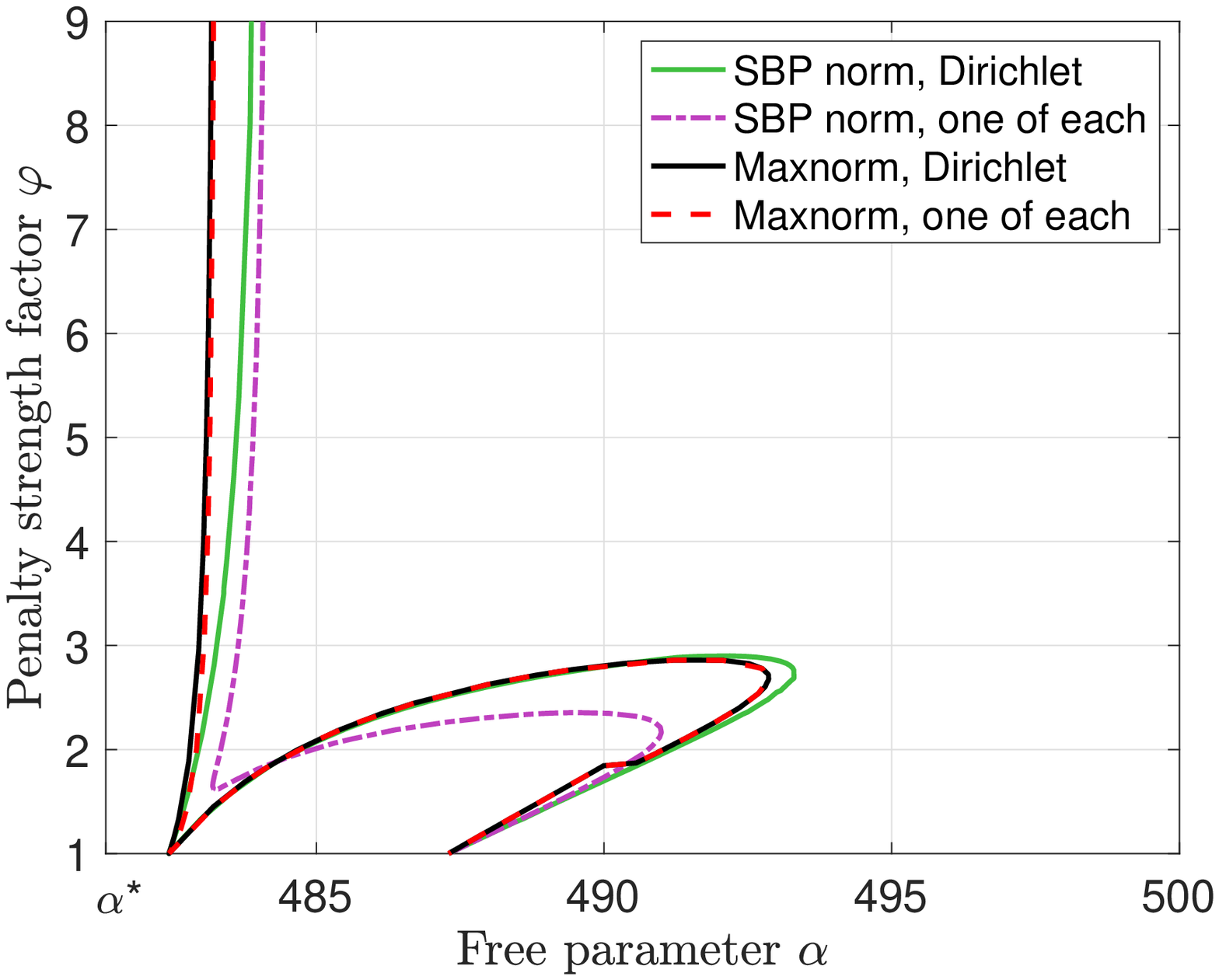}
 }
 \caption{Numerical spectrum and optimal combinations of $(\free,\factor)$ obtained when imposing one Dirichlet boundary condition and one Neumann boundary condition. }
 \label{fig:OneOfEach}
\end{figure}
The errors also have similar overall behaviour as in the pure Dirichlet case. 
In the end, we obtain a similar "sliding optimum" as in Figure~\ref{SlidingOptimum}(a) with the values  shifted compared to the pure Dirichlet case, see the purple dash-dotted curve in Figure~\ref{fig:OneOfEach}(b).
The two curves intersect at $(\free,\factor)=(484.3,1.88)$ and join when $(\free,\factor)\to(484.3,\infty)$ and $(\free,\factor)\to(487.3,1)$.

For the Dirichlet boundary conditions we have for clarity focused on the errors in the SBP norm, but the behavior is similar in other norms. Producing a sliding optimum for the maximum norm gives a quite similar result as the green curve, both when having Dirichlet boundary conditions on both boundaries or on one only, see the black solid and the red dashed curves in Figure~\ref{fig:OneOfEach}(b). 
All in all, having one boundary condition of each type gives us the same kind of balance problem as in the pure Dirichlet case -- what combinations of $\free$ and $\factor$ is considered the best depends on how low errors versus low stiffness are valued.

\subsection{Choosing an optimal value of $\free$ 
}\label{sec_rec}

For convenience, before doing time dependent simulations, we gather all our information about the  free parameter $\free$ in one place, see Table~\ref{tab:DemandsOnFree}.
\begin{table}[ht]
 \centering
 \begin{tabular}{|l|l|l|}
 \hline
 Requirement & Reason & Reference\\\hline
 $\free\geq481.3408873321106$ & Stability: To make $A\geq0$&Equation \eqref{freelimit}\\
 $\free\geq481.35207212433$&Compatibility: Possibility of making $D_1$ and $D_2$ compatible&Figure~\ref{fig:CompatibilityRegion2}\\
 $\free\geq 481.6401641339156$&Compatibility between $D_2$ and $D_1$ with $\freeD=331/472$& Section~\ref{SecComp}\\\hline
 $\free\leq485.4359027777778$&Accuracy: To make $D_2$ optimally accurate (in any norm) &Equation \eqref{AccuracyD2}\\\hline
 $\free\approx483.30$&Accuracy, Neumann: Minimizes the maximum norm of $\fel$&Section~\ref{SubSecNeumann}\\
 $\free\approx484.11$&Accuracy, Neumann: Minimizes the $L^2$-norm of $\fel$&Section~\ref{SubSecNeumann}\\
 $\free\approx484.30$
 &Accuracy, Neumann: Minimizes $\|\fel\|_H$&Figure~\ref{NeumannAccuracyAndSpectrum}(a)\\
 $\free\leq484.6$&Stiffness, Neumann: Keeps the spectral radius of $\DN$ small&Figure~\ref{NeumannAccuracyAndSpectrum}(b)\\
 \hline
 $482.44\leq\free\leq483.30$&Accuracy, Dirichlet: Minimizes the maximum norm of $\fel$&Section~\ref{SubSecDirichlet}\\
 $482.44\leq\free\leq484.11$&Accuracy, Dirichlet: Minimizes the $L^2$-norm of $\fel$&Section~\ref{SubSecDirichlet}\\
 $482.44\leq\free\leq484.30$&Accuracy, Dirichlet: Minimizes $\|\fel\|_H$ & Equation \eqref{FreeAccDir}\\
 $\free\geq487.30$&Stiffness, Dirichlet: To minimize the spectral radius of $\DD$&Figure~\ref{Fig7_SpectralRadiusD2tildeDirichlet}\\
 $482.44\leq\free\leq493.31$&Weighted compromise between small $\|\fel\|_H$ and small $\rho(\DD)$&Equation~\eqref{WeightedOptimal}\\
 \hline
 \end{tabular}
 \caption{Summary of demands and requests on $\free$}
 \label{tab:DemandsOnFree}
\end{table}
We want to stress that we have not tested all aspects and properties of $D_2$ and can therefore not single out a "best" value of $\free$ for a general purpose $D_2$.
However, 
when optimizing for a specific purpose, 
the free parameter offers a possibility to improvement.
%
Based on the investigations we have done, taking stability, compatibility, accuracy and spectral radius into consideration for the scalar Poisson's equation, we recommend to use $482.44\leq\free\leq493.31$. 
Depending on application, we suggest:
\begin{itemize}
\item For Neumann boundary conditions: Use $\free\approx484$ to minimize both the error and spectral radius.
\item For Dirichlet boundary conditions or mixed boundary conditions: Recall that the penalty strength $\factor$ from \eqref{factor} must be taken into consideration when choosing $\free$. That is, use a combination of $(\free,\factor)$ from Figure~\ref{SlidingOptimum} (exemplified in 
Table~\ref{tab:optimumcombinationsALT}) or Figure~\ref{fig:OneOfEach}(b). 
The standard value $\free=490$ gives a good balance between error and spectral radius with 
$1.7\lesssim\factor\lesssim2.8$.
If a smaller spectral radius is important, choose  $(\free,\factor)\approx(488,1.2)$.  If higher accuracy is desired,  choose 
for example $(\free,\factor)\approx(483,2)$.
\end{itemize}
Finally, a reminder: If opting for using any $\free\neq490$, keep in mind that the borrowing capacity $\app$ needs to be adjusted accordingly. Using \eqref{ComputeApp}, we obtain for example
\begin{align*}
\app&=0.087556118235046,&&\text{for }\free=483\\
\app&=0.187871502626966,&&\text{for }\free=490
\end{align*}
for $n\geq21$.

Next, we are going to see how the operators $\DDN$ behave in time-dependent problems, exemplified first with the heat equation and thereafter the wave equation.


\subsection{The heat equation}

We consider 
the heat equation $\contsol_{t}=\contsol_{xx}+\force$ with Neumann boundary conditions, discretized as \eqref{SchemeTime} with the term $\discsol_{tt}$ replaced by $\discsol_{t}$.
As the exact (manufactured) solution, we use 
 \begin{align*}
 \contsol(x,t)=\frac{\sin( cx+2c^2t)e^{ c(x-1)} +\sin(-cx+2c^2t)e^{-c(x-1)}}{e^c+e^{-c}},
 \end{align*} 
which has zero forcing function. As initial data to the numerical solution, we use the restriction of $\contsol(x,0)$ to the grid $\xfat$. 
We have used the implicit Runge--Kutta solver LobattoIIIC for the time-stepping, which is fourth order accurate and without stability restrictions on the time-step. The time-steps are chosen small enough such that the spatial errors dominate.
\begin{figure}[hbt]
\centering
\subfigure[End time 10, time steps 40000, $n=30$
]{\includegraphics[width=0.5\textwidth]{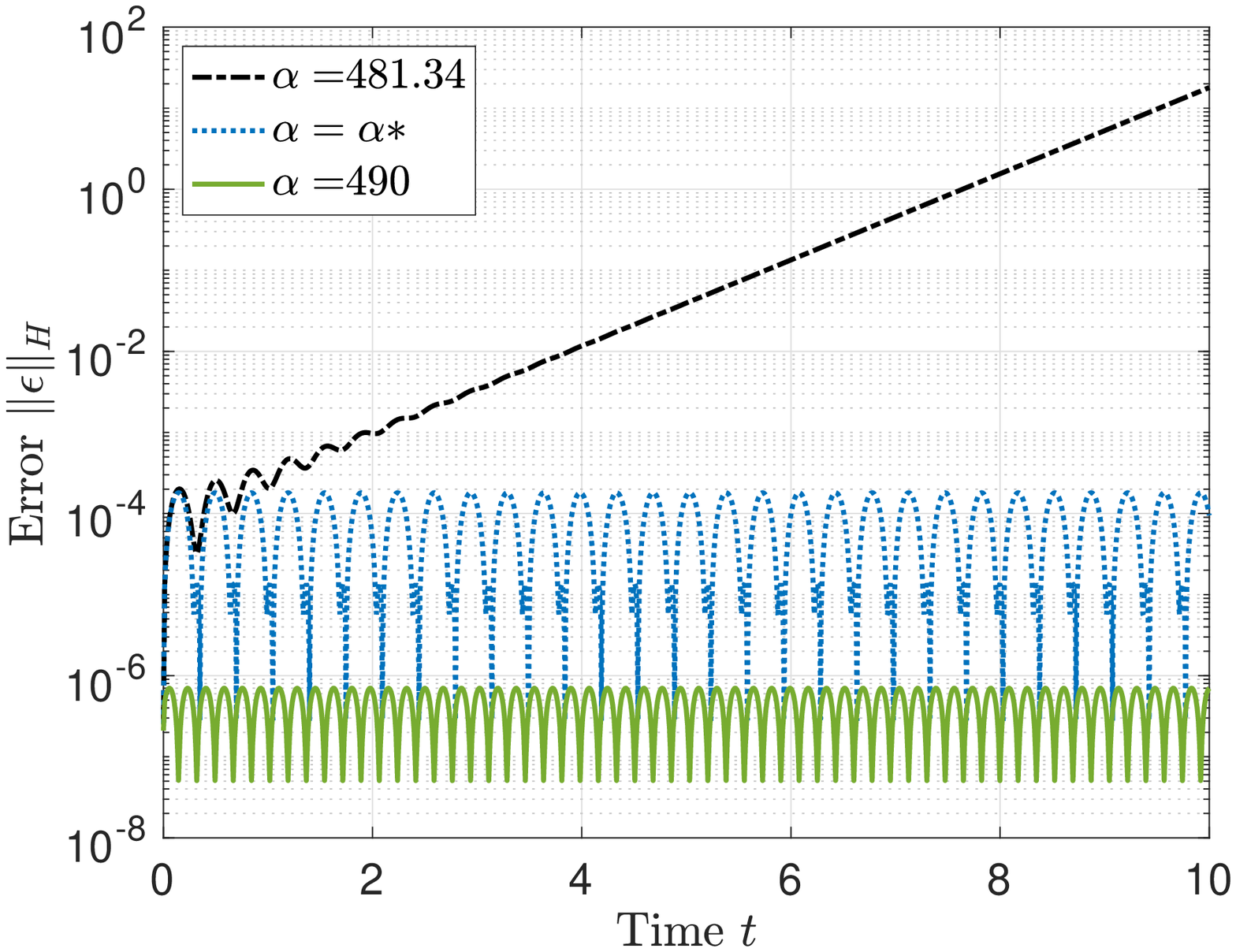}}%
\subfigure[End time 1, time steps 4000, $n=25, 50, 100, 200$]{\includegraphics[width=0.5\textwidth]{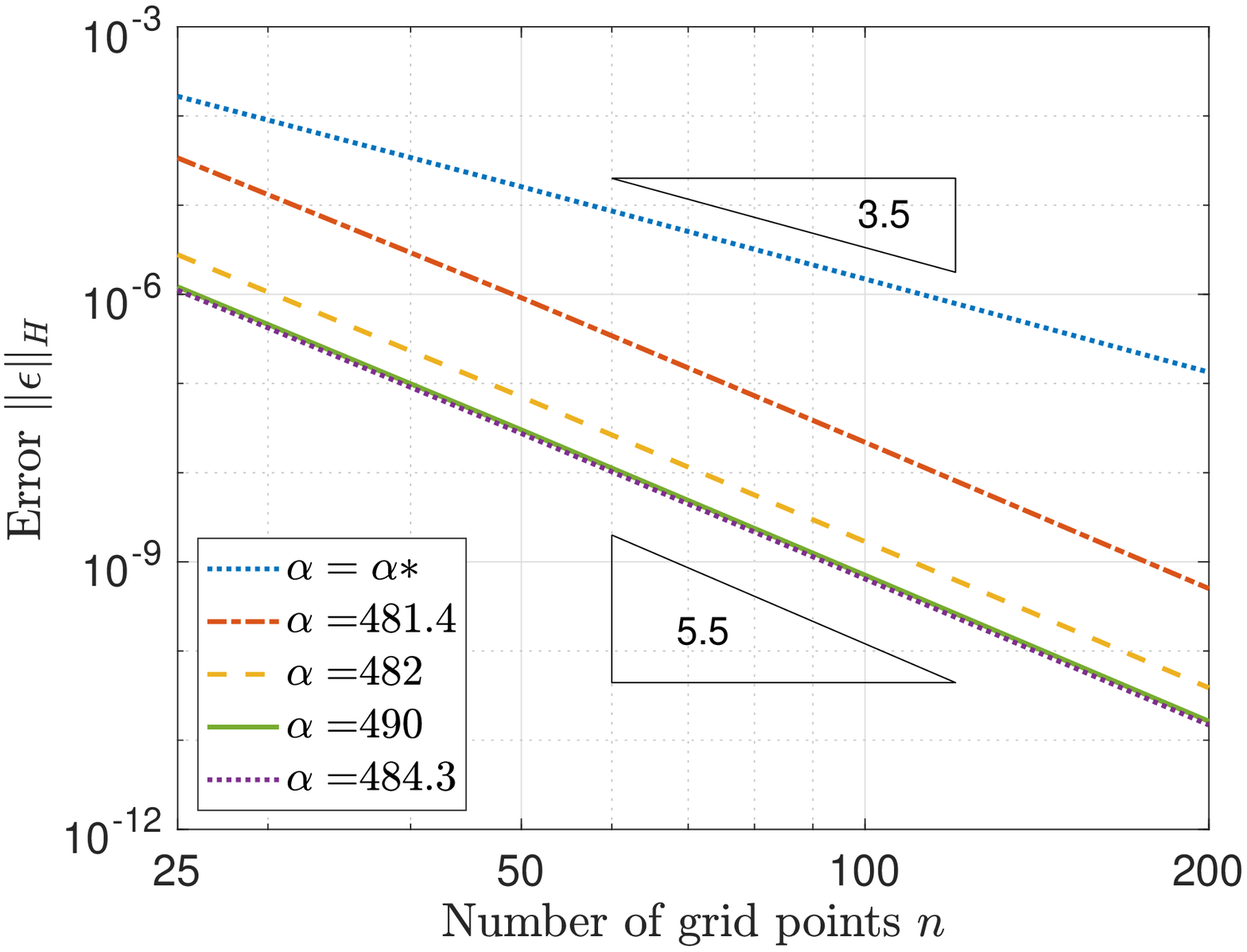}}
\caption{The error as a function of time, for various choices of $\free$. Here with $c=3$. }
\label{fig:HeatNeumannN40}
\end{figure}
In Figure~\ref{fig:HeatNeumannN40}(a), we show the resulting error $\|\fel\|_H$, as a function of time. 
We see that if we choose the free parameter $\free<\limit$, such that $A$ has negative eigenvalues, the error grows as a consequence of the lack of an energy estimate. If we instead choose $\free=\limit$, where $A$ has one (in practice two) additional zero eigenvalue(s), it is still possible to achieve an energy estimate. However, the convergence rate is decreased compared to the choices $\free>\limit$, see Figure~\ref{fig:HeatNeumannN40}(b). 
Since the errors vary in time, the convergence rates shown in Figure~\ref{fig:HeatNeumannN40}(b) have been computed using average errors. Note that compared to the logarithmic scaling it is difficult to see, but the error is 10\% smaller when using $\free=484.3$ instead of $\free=490$, which is consistent with the results from the time-independent case.

\subsection{The wave equation}

Consider the wave equation $u_{tt}=u_{xx}$ in the domain $x\in [0,1]$ and $t\in [0,2]$. The boundary condition is either Dirichlet on both sides or Neumann on both sides, with the boundary data obtained by the manufactured solution $u=\cos(2\pi x+1)\cos(2\pi t+2)$. The semi-discrete approximation is obtained by the SBP-SAT method presented in Section \ref{sec_discrete}. We note that the same discretization matrices, $\DDN$, are also used to discretize Poisson's equation in Sections~\ref{SubSecNeumann}-\ref{SubSecDirichlet}, where the accuracy and spectrum properties of $\DDN$ are analyzed. 

For the time discretization of the wave equation, explicit time integrators are advantageous for computational efficiency. We choose the classical Runge-Kutta method. When verifying accuracy of the semi-discrete approximation, the time step $dt$ is chosen small enough so that the error in the numerical solution is dominated by the spatial discretization. However, the time step restriction for stability is determined by the CFL condition $dt\leq ch$, where the constant $c$ depends on $\rho(\DDN)$, i.e. the spectral radius of the discretization matrix $\DDN$. 
For the Neumann problem, $\rho(\DN)$ is plotted in Figure \ref{NeumannAccuracyAndSpectrum}(b) as a function of $\free$ and is small when $\free< 484.6$. For the Dirichlet problem, the spectral radius $\rho(\DD)$ is plotted in Figure \ref{Fig7_SpectralRadiusD2tildeDirichlet} as a function of $\free$, and takes the smallest value when $\free\approx 487.3$ for $\factor=1$. We note that for a fixed $\free$, the spectral radius increases when $\factor$ increases.


We solve the wave equation with 31 grid points in space ($n=30$), and plot the error in the SBP norm with different values of $\free$ and $\factor$
in Figure \ref{fig:Wave_D_N}. To account for the time evolution, we plot the arithmetic mean of the errors from all time steps.
\begin{figure}[ht]\centering
\includegraphics[width=.6\textwidth]
{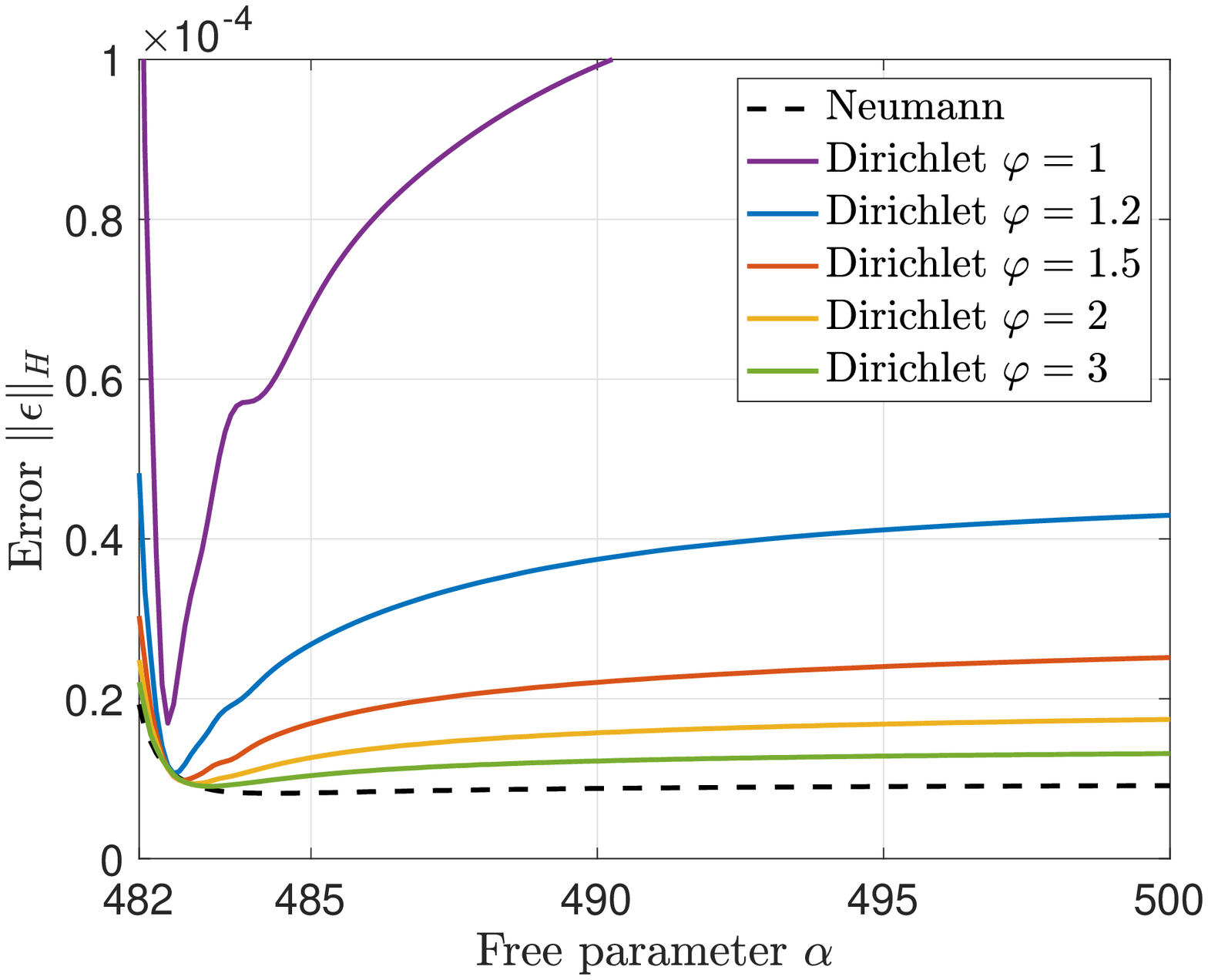}
\caption{$\|\fel\|_H$ as a function of $\free$, for various values of $\factor$.}
\label{fig:Wave_D_N}
\end{figure} 
We observe that for a fixed $\free$, a larger $\factor$ gives a smaller error. For a fixed $\factor$, the error is smallest when $\free\in [482.44, 484.30]$.
This error behavior is very similar to what was observed for Poisson's equation in Figure~\ref{D2tildeDirichletAcc}(a), although there it was not possible to use $\factor=1$.
Thus the conflict between spectral radius and accuracy is the same as was seen for Poisson's equation. A good balance can be obtained by using the recommended combination of $\free$ and $\factor$ in Section \ref{sec_rec}.

\section{Conclusions}

For the SBP-SAT approximation of the second derivative, the discretization matrix is singular for equations with Neumann boundary conditions only. The first main contribution of this paper is to derive an analytical expression of the Moore--Penrose inverse for that singular discretization matrix. This can be used to solve Poisson's equation and certain discretizations of time-dependent problems. For the second and fourth order schemes, we have proved that the discretization matrix is rank deficient by one, which is often assumed in previous works. 

In addition, we have constructed a one-parameter family of the sixth order accurate SBP operator for the second derivative, with a particular choice of the parameter reducing to the original operator in \cite{Mattsson2004503}. 
We have seen that it is possible to tune this free parameter such that the discretization matrix is rank deficient by {\it more than} one, indicating that the assumption of rank deficiency by one is not guaranteed per se. On the other hand, the free parameter also offers a possibility of improvement.
Considering different equations and boundary conditions, we have performed a detailed analysis for the parameter choices to optimize the corresponding SBP operators in terms of stability, accuracy and spectral radius. Our results have shown that improvements can be made by choosing the parameter differently than the original value, especially when having Neumann boundary conditions.





\appendix

\section{Details from Section 5}
\label{Syst2421}

The accuracy and symmetry demands on $A$ from Section~\ref{constructA} yields the following linear system of equations:
\begin{align*}
\scalebox{.870}{$\left[\begin{array}{ccccccccccccccccccccc|c}
1 & 1 & 1 & 1 & 1 & 1 & 0 & 0 & 0 & 0 & 0 & 0 & 0 & 0 & 0 & 0 & 0 & 0 & 0 & 0 & 0 & 0\\ 
 0 & 1 & 2 & 3 & 4 & 5 & 0 & 0 & 0 & 0 & 0 & 0 & 0 & 0 & 0 & 0 & 0 & 0 & 0 & 0 & 0 & -1\\ 
 0 & 1 & 8 & 27 & 64 & 125 & 0 & 0 & 0 & 0 & 0 & 0 & 0 & 0 & 0 & 0 & 0 & 0 & 0 & 0 & 0 & 0\\ 
 0 & 1 & 16 & 81 & 256 & 625 & 0 & 0 & 0 & 0 & 0 & 0 & 0 & 0 & 0 & 0 & 0 & 0 & 0 & 0 & 0 & 0\\ 
 0 & 1 & 0 & 0 & 0 & 0 & 1 & 1 & 1 & 1 & 1 & 0 & 0 & 0 & 0 & 0 & 0 & 0 & 0 & 0 & 0 & 0\\ 
 0 &-1 & 0 & 0 & 0 & 0 & 0 & 1 & 2 & 3 & 4 & 0 & 0 & 0 & 0 & 0 & 0 & 0 & 0 & 0 & 0 & 0\\
 0 & -1 & 0 & 0 & 0 & 0 & 0 & 1 & 8 & 27 & 64 & 0 & 0 & 0 & 0 & 0 & 0 & 0 & 0 & 0 & 0 & 0\\ 
 0 & 1 & 0 & 0 & 0 & 0 & 0 & 1 & 16 & 81 & 256 & 0 & 0 & 0 & 0 & 0 & 0 & 0 & 0 & 0 & 0 & 0\\ 
 0 & 0 & 1 & 0 & 0 & 0 & 0 & 1 & 0 & 0 & 0 & 1 & 1 & 1 & 1 & 0 & 0 & 0 & 0 & 0 & 0 & 0\\ 
 0 & 0 & -2 & 0 & 0 & 0 & 0 & -1 & 0 & 0 & 0 & 0 & 1 & 2 & 3 & 0 & 0 & 0 & 0 & 0 & 0 & 0\\
 0 & 0 & -8 & 0 & 0 & 0 & 0 & -1 & 0 & 0 & 0 & 0 & 1 & 8 & 27 & 0 & 0 & 0 & 0 & 0 & 0 & 0\\
 0 & 0 & 16 & 0 & 0 & 0 & 0 & 1 & 0 & 0 & 0 & 0 & 1 & 16 & 81 & 0 & 0 & 0 & 0 & 0 & 0 & 0\\
 0 & 0 & 0 & 1 & 0 & 0 & 0 & 0 & 1 & 0 & 0 & 0 & 1 & 0 & 0 & 1 & 1 & 1 & 0 & 0 & 0 & \frac{2}{180}\\ 
 0 & 0 & 0 &-3 & 0 & 0 & 0 & 0 & -2 & 0 & 0 & 0 & -1 & 0 & 0 & 0 & 1 & 2 & 0 & 0 & 0 &  \frac{6}{180}\\ 
 0 & 0 & 0 &-27 & 0 & 0 & 0 & 0 & -8 & 0 & 0 & 0 & -1 & 0 & 0 & 0 & 1 & 8 & 0 & 0 & 0 &  \frac{54}{180}\\ 
 0 & 0 & 0 & 81 & 0 & 0 & 0 & 0 & 16 & 0 & 0 & 0 & 1 & 0 & 0 & 0 & 1 & 16 & 0 & 0 & 0 &  \frac{162}{180}\\ 
 0 & 0 & 0 & 0 & 1 & 0 & 0 & 0 & 0 & 1 & 0 & 0 & 0 & 1 & 0 & 0 & 1 & 0 & 1 & 1 & 0 &  \frac{-25}{180}\\ 
 0 & 0 & 0 & 0 & -4 & 0 & 0 & 0 & 0 & -3 & 0 & 0 & 0 & -2 & 0 & 0 & -1 & 0 & 0 & 1 & 0 &  \frac{-48}{180}\\ 
 0 & 0 & 0 & 0 & -64 & 0 & 0 & 0 & 0 & -27 & 0 & 0 & 0 & -8 & 0 & 0 & -1 & 0 & 0 & 1 & 0 & \frac{ -162}{180}\\ 
 0 & 0 & 0 & 0 & 256 & 0 & 0 & 0 & 0 & 81 & 0 & 0 & 0 & 16 & 0 & 0 & 1 & 0 & 0 & 1 & 0 &  \frac{-270}{180}\\ 
 0 & 0 & 0 & 0 & 0 & 1 & 0 & 0 & 0 & 0 & 1 & 0 & 0 & 0 & 1 & 0 & 0 & 1 & 0 & 1 & 1 & \frac{ 245}{180}\\ 
 0 & 0 & 0 & 0 & 0 &-5 & 0 & 0 & 0 & 0 & -4 & 0 & 0 & 0 &-3 & 0 & 0 & -2 & 0 & -1 & 0 &  \frac{222}{180}\\ 
 0 & 0 & 0 & 0 & 0& -125 & 0 & 0 & 0 & 0 & -64 & 0 & 0 & 0 &-27 & 0 & 0 & -8 & 0 & -1 & 0 &  \frac{108}{180}\\ 
 0 & 0 & 0 & 0 & 0 & 625 & 0 & 0 & 0 & 0 & 256 & 0 & 0 & 0 & 81 & 0 & 0 & 16 & 0 & 1 & 0 & 0 
 \end{array}\right] $}
\end{align*}
This system has 24 equations and 21 unknowns,
where the unknowns are ordered as $\aaa$, $\aab$, $\aac$, $\aad$, $\aae$, $\aaf$, $\abb$, $\abc$, $\abd$, $\abe$, $\abf$, $\acc$, $\acd$, $\ace$, $\acf$, $\add$, $\ade$, $\adf$, $\aee$, $\aef$ and $\aff$. Its solution is shown in \eqref{A6}.

Furthermore, in the proof of Proposition~\ref{propfreelimit}, it is referred to Table~\ref{Tab:ConvFree}. The table shows the two values of $\limit$ as a function of $n\leq24$, and how they converge as $n$ increases.
\begin{table}[htbp]
\centering
$\begin{array}{|ccc|}
\hline
\min(\limit)&\max(\limit)&n\\\hline
481.3406894997601&481.3410851822219&n=11\\
481.3408797227131&481.3408949417200&n=12\\
481.3408847406793&481.3408899235506&n=13\\
481.3408871324619&481.3408875317594&n=14\\
481.3408873292311&481.3408873349902&n=15\\
481.3408873299936&481.3408873342276&n=16\\
481.3408873319172&481.3408873323040&n=17\\
481.3408873321098&481.3408873321114&n=18\\
481.3408873321089&481.3408873321123&n=19\\
481.3408873321105&481.3408873321108&n=20\\
481.3408873321106&481.3408873321106&n=21\\
481.3408873321106&481.3408873321106&n=22\\
481.3408873321106&481.3408873321106&n=23\\
481.3408873321106&481.3408873321106&n=24\\
\hline
\end{array}$
\caption{The value of $\limit$ varies slightly as a function of $n$, but has converged for $n\geq21$.}
\label{Tab:ConvFree}
\end{table}

\newpage

\bibliographystyle{plain} 
\bibliography{ref} %


\end{document}